   \def\MR#1{}
\long\def\@savemarbox#1#2{\global\setbox#1\vtop{\hsize\marginparwidth
  \@parboxrestore\tiny\raggedright #2}}
\numberwithin{equation}{section}
\theoremstyle{plain}
\newtheorem{theorem}[equation]{Theorem}
\newtheorem{lemma}[equation]{Lemma}
\newtheorem{proposition}[equation]{Proposition}
\newtheorem*{namedtheorem}{\theoremname}
\newcommand{\theoremname}{testing}
\theoremstyle{definition}
\newtheorem{definition}[equation]{Definition}
\newtheorem{remark}[equation]{Remark}
\newtheorem{example}[equation]{Example}
\newtheorem{assumption}[equation]{Assumption}
\newcommand{\bdy}{\partial}
\newcommand{\refthm}[1]{Theorem~\ref{Thm:#1}}
\newcommand{\reflem}[1]{Lemma~\ref{Lem:#1}}
\newcommand{\refprop}[1]{Proposition~\ref{Prop:#1}}
\newcommand{\refrem}[1]{Remark~\ref{Rem:#1}}
\newcommand{\refeqn}[1]{\eqref{Eqn:#1}}
\newcommand{\refitm}[1]{\eqref{Itm:#1}}
\newcommand{\refdef}[1]{Definition~\ref{Def:#1}}
\newcommand{\refsec}[1]{Section~\ref{Sec:#1}}
\newcommand{\reffig}[1]{Figure~\ref{Fig:#1}}
\title[Standard position]{Standard position for surfaces in link complements in arbitrary 3-manifolds}
\author{Jessica S. Purcell}
\author{Anastasiia Tsvietkova}
\date{}
\subjclass[2010]{}
\thanks{23 August 2023}
\begin{document}

\begin{abstract}
Since the 1980s, it has been known that essential surfaces in alternating link complements can be isotoped to be transverse to the link diagram almost everywhere, with the exception of some well-understood intersections, and described combinatorially as a result.
This was called standard position for surfaces and has had numerous applications. However, the original techniques only apply to classical alternating links projected onto the 2-sphere inside the 3-sphere. 
In this paper, we prove that standard position for surfaces can be extended to a broader class, namely weakly generalized alternating links. Such links include all classical prime non-split alternating links in the 3-sphere, and also many links that are alternating on higher genus surfaces, or lie in manifolds besides the 3-sphere. As an application, we show that all such links are prime, and that under mild restrictions, essential Conway spheres for such links interact with the diagram exactly as in the classical alternating setting.
\end{abstract}

\maketitle

\section{Introduction}\label{Sec:Intro}

Essential embedded surfaces in a 3-manifold can provide useful information about the topology of that manifold. For example, Haken manifolds, which contain an orientable closed essential surface, have been an object of significant research since they were introduced in the 1960's and have many important properties \cite{Haken, Waldhausen}. To work with essential embedded surfaces, it is often useful to put them in a certain topological form that relates to properties of the ambient 3-manifold. When the ambient 3-manifold is an alternating link complement, Menasco and Thistlethwaite showed that an essential surface could be isotoped into \emph{standard position} with respect to the link projection surface~\cite{Menasco1984, MenascoThistlethwaite}. For more general 3-manifolds, an essential surface may be isotoped into \emph{normal form} with respect to a triangulation of a 3-manifold~\cite{Haken}, or similarly normal form with respect to polyhedral or other decompositions~\cite{lac00, fg09}. In \cite{HowiePurcell}, Howie and Purcell extended normal form to broad generalizations of alternating links, called weakly generalized alternating links, and used this form to determine geometric information, generalizing several of the above results.

In this paper, we take this generalization further. Menasco in~\cite{Menasco1984} and Menasco and Thistlethwaite in~\cite{MenascoThistlethwaite} proved that essential surfaces in alternating link complements could be isotoped so that they closely interact with the combinatorics of the link diagram. Here, we generalize these results to weakly generalized alternating links.

As a corollary, we show that all weakly generalized alternating links are prime, extending a result of Howie and Purcell~\cite[Corollary~4.7]{HowiePurcell}, who showed primeness of such links with some restrictions.

We also classify essential Conway spheres for weakly generalized alternating links satisfying mild restrictions, showing that they either intersect the diagram in a single curve meeting four punctures (a ``visible'' sphere), or intersect it in two well-behaved curves (a ``hidden'' sphere). The visible--hidden dichotomy is analogous to a similar result for classical alternating links, discovered by Menasco~\cite{Menasco1984}, named and refined by Thistlethwaite~\cite{Thistlethwaite}, and used to study alternating tangles by others, for example in other collaborations by the authors: Hass, Thompson, and Tsvietkova~\cite{HTT3}, and Champanerkar, Kofman, and Purcell~\cite{CKP:RAK}.

\subsection{Alternating links in the 3-sphere and beyond}
Normal and standard form for surfaces in the complement of alternating links in the 3-sphere have had many applications. Historically, Menasco used it to prove a number of results for alternating link complements in the 3-sphere, such as classifying when the link is prime, split and hyperbolic \cite{Menasco1984}, and determining when a surface is incompressible \cite{Menasco1985}. Menasco and Thistlethwaite used it to show that the only reducible Dehn fillings are the expected ones on $(2,q)$-torus knots~\cite{MenascoThistlethwaite}, and thus to prove the cabling conjecture for all alternating knots. Lackenby used normal form to describe exceptional Dehn fillings \cite{lac00}. Howie used it to show that essential embedded surfaces with boundary can be used to detect classical alternating links \cite{Howie2017}. Hass, Thompson and Tsvietkova used it to give universal polynomial bounds for the number of embedded surfaces in alternating link complements \cite{HTT1, HTT2}. These are just some examples of numerous applications to alternating links in the 3-sphere.

In recent years, there has been significant interest in the study of alternating projections onto surfaces besides the 2-sphere, for example due to Adams~\cite{Adams:ToroidAlt}, Hayashi~\cite{Hayashi}, Ozawa~\cite{oza06}, Howie~\cite{how15t}, and others. There has also been interest in alternating knots within manifolds besides the 3-sphere, such as virtual knots, for example by Adams~\cite{Adams:ThickenedSfces}, Champanerkar, Kofman, and Purcell~\cite{CKP:Biperiodic}, and Howie and Purcell~\cite{HowiePurcell}.
The weakly generalized alternating links of this paper match those of Howie and Purcell. They include classical alternating knots on a 2-sphere in the 3-sphere, and also include many of the other generalizations above, including families of toroidally alternating knots and alternating knots on other Heegaard surfaces, alternating knots in thickened surfaces, i.e.\ virtual alternating knots, and further examples still. The full definition of such links is given in \refsec{WGA}.

\subsection{Meridianal surfaces in meridianal form}
One of the key results in this paper is to show that essential surfaces meeting a knot in meridians can be simultaneously isotoped into normal form without distorting the boundary curves. We call this \emph{meridianal form}, defined in \refsec{SBP}.

Menasco showed that any closed essential surface embedded in a non-split prime alternating link complement in $S^3$ contains a closed curve isotopic to the link meridian~\cite{Menasco1984}. This sometimes is referred to as the Meridian Lemma. By compressing along a meridianal annulus, it allows one to reduce the study of closed essential surfaces to the study of essential surfaces with meridianal boundary within the complement of a regular neighborhood of the link, i.e.\ the link exterior. 
A meridian lemma also holds for certain weakly generalized alternating links due to Howie and Purcell~\cite[Lemma~4.9]{HowiePurcell}, and has been announced for weakly generalized alternating links that are also virtual alternating links by Wei Lin~\cite{WeiLin}. A meridian lemma holds for other knots, such as algebraic knots~\cite{Ozawa:Algebraic}. For knots that satisfy such a lemma, a meridian compression yields a surface that can be put into meridianal form.
This was used, for example, in the proof of a polynomial upper bound on the number of closed surfaces in alternating links by Hass, Thompson and Tsvietkova~\cite{HTT1}, and in Lozano and Przytycki's work on 3-braid links~\cite{LP}.

Surfaces that meet a knot exterior in meridians of the knot also include meridianal annuli. These are essential annuli with both boundary components forming meridians of the knot.

By definition, if a link exterior admits an essential meridianal annulus, the link is not prime.
Studying such surfaces in the past led to results that a classical alternating link is prime if and only if its diagram is prime, due to Menasco~\cite{Menasco1984}.
In this paper, we show that all weakly generalized alternating links must be prime in \refthm{Prime}. This is analogous to Menasco's result, since weakly generalized alternating links have a diagrammatic primeness condition built into the definition.
Howie and Purcell proved a similar result for weakly generalized alternating links with an additional condition (namely $\hat{r}>4$; see \refsec{WGA})~\cite[Corollary~4.7]{HowiePurcell}.
Howie and Purcell's result already implies that all virtual alternating links that are checkerboard colorable and have a weakly prime diagram must also be prime. For example this is true for virtual alternating links whose diagrams have all complementary regions disks, as studied by Adams \emph{et~al}~\cite{Adams:ThickenedSfces}. However, the results in this paper extend primeness to broader families of weakly generalized alternating links, for example on projection surfaces that are compressible, which do not satisfy Howie and Purcell's stronger requirement.

Note that in the virtual setting, other notions of primeness also appear in the literature, for example recent work of Kindred~\cite{Kindred:Prime}.

Finally, surfaces meeting a knot exterior in meridians include 4-punctured spheres, which separate a knot into tangles. Examining such surfaces has allowed the detection and study of prime tangles, for example by Menasco and Thistlethwaite~\cite{Menasco1984, Thistlethwaite}, and more recently by Hass, Thompson and Tsvietkova~\cite{HTT3}, and Champanerkar, Kofman, and Purcell~\cite{CKP:RAK}.
The tools of this paper also allow us to extend such results to larger families of links. We consider essential 4-punctured spheres in the complement of weakly generalized alternating links satisfying an extra condition (namely $\hat{r}(\pi(L),\Pi)>4$), and we show that they satisfy the same constraints as in the classical alternating case; see \refthm{EssentialTangles}. In particular, they result in either a ``visible'' tangle (one $PPPP$ curve in standard position in Menasco's language), or a ``hidden'' tangle (two $PSPS$ curves); see \refsec{ConwaySpheres} for definitions.
We expect this to lead to further study of tangles for knots projected onto surfaces beyond the 2-sphere.

\subsection{Cyclic words associated to a surface}

We extend standard position to closed surfaces and meridianal surfaces, but also to surfaces with non-meridianal boundary.

One of the benefits of standard position for classical alternating links is that it allows us to translate topological properties of surfaces into combinatorial properties of their curves of intersection with the (slightly modified) projection surface for a link. Such intersections were encoded in \cite{Menasco1984} by letters $P$ (for link intesections) and $S$ (for crossing ball intersections). Every curve of intersection was put in correspondence with a cyclic word in such letters. Then possible words were investigated. In \cite{HTT1}, this was extended further to surfaces with boundary, now with letters $B$ (standing for arriving at the boundary of the surface) and $S$. We generalize this to weakly generalized alternating links in this paper.

This combinatorial approach played an important role in several results for alternating links in the 3-sphere, including many noted above. In addition, it has been used to classify genus two surfaces \cite{Thistlethwaite} and to find incompressible surfaces in alternating link complements in \cite{Menasco1985}.

In a subsequent paper, we apply this work to the problem of bounding the total number of embedded essential surface in these link complements~\cite{PT}. We apply these results together with combinatorial and geometric arguments to give a polynomial upper bound on the number of embedded surfaces in a wide class of cusped 3-manifolds, namely weakly generalized alternating link complements and their Dehn fillings.
Our upper bound only depends on the crossing number of the diagram, and is independent of the 3-manifold otherwise. These are the first bounds independent of the 3-manifold besides those for classical alternating links in $S^3$ in \cite{HTT1, HTT2};
in other existing work, the bound depends on the 3-manifold.

\subsection{Organization}

In \refsec{WGA} we recall the definition of weakly generalized alternating links. Their primary feature is recalled in \refsec{Chunks}, namely, they have a decomposition into checkerboard \emph{chunks} that extend much of the useful decomposition of alternating links into topological polyhedra via checkerboard decomposition, which Thurston linkened to gears of a machine~\cite{Thurston:notes}. We recall the definition of normal surfaces in chunks in \refsec{NormalSfc}. 
In \refsec{SBP} we describe the labelling of intersections of such surfaces with the chunk boundary by letters $P$, $B$, and $S$.

Our first main result, on meridianal form for normal surfaces, is proved in \refsec{Meridianal}. In \refsec{Standard}, we compile a few results on surfaces in standard form, including a result that shows that such a surface cannot meet the same saddle twice in a simple way.

In \refsec{Area}, we recall the notion of combinatorial area, introduced in this setting by Howie and Purcell~\cite{HowiePurcell}. Combining combinatorial area results with the labelling of boundary curves by letters $P$, $B$, and $S$, we find that the only zero area surfaces in a weakly generalized alternating link complement (with some restrictions) are disks with certain words on their boundaries. Sections~\ref{Sec:SSSS}, \ref{Sec:PPPP}, and~\ref{Sec:BBBB} are devoted to ruling out instances of zero area disks, or controlling when they occur. 
As a corollary of the work of \refsec{PPPP}, we prove the result on prime links.

Finally, in \refsec{ConwaySpheres}, we apply our results to essential Conway spheres to reproduce a result of Menasco~\cite{Menasco1984}, that such spheres are either ``visible'' or ``hidden'', with terminology due to Thistlethwaite~\cite{Thistlethwaite}.

\subsection{Acknowledgments}
We thank an anonymous referee whose suggestions have greatly improved the paper.
Purcell was partially supported by the Australian Research Council, grants DP160103085 and DP210103136. Tsvietkova was partially supported by the National Science Foundation (NSF) of the United States, grants DMS-2142487 (CAREER), DMS-1664425 (previously 1406588) and DMS-2005496, the Institute of Advanced Study under NSF grant DMS-1926686, and the Okinawa Institute of Science and Technology.

\section{Weakly generalized alternating link complements}\label{Sec:WGA}

In this section, we follow Howie and Purcell~\cite{HowiePurcell} to introduce a broad class of links that each have a diagram that is alternating on a closed surface $\Pi$ embedded in a 3-manifold $Y$, possibly with boundary.
As noted in the introduction, this is a very broad class that includes classical alternating knots, but also alternating knots on a Heegaard torus or more general Heegaard surface, originally studied by Adams~\cite{Adams:ToroidAlt} and Hayashi~\cite{Hayashi}, virtual alternating knots~\cite{Adams:ThickenedSfces, CKP:Biperiodic}, and knots with alternating diagrams on any embedded surface in any compact orientable 3-manifold.

\begin{assumption}\label{Assumption:3mfld}
    Throughout this paper, we will always require the 3-manifold $Y$ to be compact, orientable, and irreducible. The projection surface $\Pi$ is required to be a closed, orientable surface. If $Y$ has boundary, we will require $\bdy Y$ to be incompressible in $Y-N(\Pi)$, where $N(\cdot)$ always denotes a regular neighborhood. We further require $Y-N(\Pi)$ to be irreducible.
\end{assumption}

Given a link $L$ in $Y$, the \emph{link complement} is the manifold $Y-L$. The \emph{link exterior} is the compact manifold $Y-N(L)$. Note that if $Y$ is closed, then the link complement is homeomorphic to the interior of the link exterior.

\subsection{Generalized projection}

A \emph{generalized projection surface} $\Pi$ is a (possibly disconnected) oriented surface embedded in $Y$ so that $Y-\Pi$ is irreducible.
The connected components of $\Pi$, denoted $\Pi_1, \dots, \Pi_p$ are closed two-sided (orientable) surfaces.  
Let $N(\Pi) = \Pi\times (-1,1)$ denote a regular neighborhood.
For each component $\Pi_i$ of $\Pi$, define $\Pi_i^{\pm}$ to be $\Pi_i\times\{\pm 1\} \subset N(\Pi)$. Denote $\bigcup \Pi_i^+$ by $\Pi^+$ and similarly for $\Pi^-$. 

Since $Y-\Pi$ is irreducible, if some $\Pi_i$ is a 2-sphere, then $\Pi$ is homeomorphic to $S^2$, and $Y$ is homeomorphic to $S^3$. Let $L$ be a link that can be projected onto $\Pi$ in general position. That is, $L$ can be isotoped through $Y$ to lie in $N(\Pi)$ so that the image of the projection $\pi(L)$ consists of crossings and arcs between them on the surface $\Pi$. 
We call $\pi(L)$ a \emph{generalized diagram}, or simply a \emph{diagram}. 

Whenever in the paper we mention $\Pi$ or $\pi(L)$, we mean a generalized projection surface and the generalized diagram of $L$ on $\Pi$. We will also use the following terms.
\begin{itemize}
\item An arc of $\pi(L)$ (on $\Pi$) between two crossings is called an \emph{edge} of the diagram.
\item A \emph{crossing arc} is a simple arc in the complement of $L$ running from an overpass to an underpass of a crossing.
\item A \emph{region} of the diagram is a complementary region of the projection of $\pi(L)$ to $\Pi$. It is bounded by edges of the diagram.
\end{itemize}

Every knot has a very simple generalized diagram on the torus boundary of a regular neighborhood of the knot. To ensure our diagrams are sufficiently complicated, in a way that depends on $Y$ and $\Pi$, we introduce the notion of representativity. 
Define $r^{\pm}(\pi(L), \Pi_i)$ to be the minimum number of intersections between the projection of $\pi(L)$ onto $\Pi_i^{\pm}$ and the boundary of any essential compressing disk for $\Pi_i^{\pm}$ in $Y- \Pi$. If there are no essential compressing disks for $\Pi_i^{\pm}$ in $Y- \Pi$, then set $r^{\pm}(\pi(L), \Pi_i)=\infty$. The \emph{representativity} $r(\pi(L), \Pi)$ is the minimum of all values of $r^-(\pi(L), \Pi_i)$ and $r^+(\pi(L), \Pi_i)$, over all $i$. By definition, a sphere $S^2$ embedded in $S^3$ admits no essential compressing disk, so a usual alternating diagram has representativity $\infty$.

\begin{example}
\reffig{CheckColorable}, left, which is modified from a figure in \cite{HowiePurcell}, shows a diagram $\pi(L)$ on a torus $\Pi$. The representativity of the diagram depends on the manifold $Y$ and the embedding of $\Pi$ into $Y$. For example, first let $Y=S^3$, and embed the torus $\Pi$ as the standard Heegaard torus for $S^3$, with the vertical red curve shown mapping to a meridian of one solid torus in the Heegaard splitting of $S^3$ and the horizontal red curve mapping to a meridian of the other solid torus. Then $r^+(\pi(L),\Pi)=3$ and $r^-(\pi(L),\Pi)=0$, so $r(\pi(L),\Pi)=0$.
\begin{figure}[h]
  \import{figures/}{CheckColourable.pdf_tex}
  \hspace{.2in}
  \includegraphics{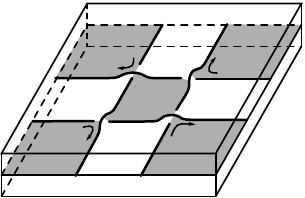}
\caption{Left: An example of an alternating diagram on a torus. The representativity will depend on the embedding of the torus into $Y$. In any case, the diagram is not checkerboard colorable. Right: A checkerboard colorable diagram.}
  \label{Fig:CheckColorable}
\end{figure}

If instead we let $Y=T^2\times [-1,1]$, the thickened torus, and we embed the torus $\Pi$ as the surface $T^2\times\{0\}$, then $\Pi$ admits no essential compressing disk, so $r^+(\pi(L),\Pi) = r^-(\pi(L),\Pi)=\infty$.
\end{example}

Finally, define the hat-representativity, $\hat{r}(\pi(L), \Pi)$, to be the minimum of
\[ \bigcup_i \max\{ r^-(\pi(L), \Pi_i), r^+(\pi(L), \Pi_i) \}. \]
Thus for example a surface with no compressing disks on one side will have infinite hat-representativity.

A generalized diagram $\pi(L)$ is said to be \emph{alternating} if for each region of $\Pi-\pi(L)$, each boundary component of the region is alternating, i.e.\ it can be given an orientation such that crossings run from under to over in the direction of orientation. An alternating generalized diagram $\pi(L)$ is said to be \emph{checkerboard colorable} if each region of $\Pi-\pi(L)$ can be oriented so that the induced orientation on each region's boundary is alternating: crossings run from under to over in the direction of orientation. Given a checkerboard colorable diagram, regions on opposite sides of an edge of $\pi(L)$ will have opposite orientations. We can color all regions with one orientation white, and all regions with the opposite orientation shaded.

\begin{example}
\reffig{CheckColorable}, left, shows a diagram that is alternating on a torus. For the outer annular region $A$ of the diagram, the figure shows an orientation assigned to its boundary. The crossings run from under to over with this orientation. Hence the region is alternating. However, this orientation is not induced by an orientation on $A$. If we choose an orientation on $A$, and then take the induced orientation on $\bdy A$, one boundary component will be oriented so that crossings run under to over, and the other will be oriented so that crossings run over to under. Hence this example is not checkerboard colorable. Note it will not be checkerboard colorable regardless of the manifold $Y$ it is embedded within.

However, \reffig{CheckColorable}, right, shows another diagram of an alternating link on a torus, embedded in $Y=T^2\times I$. This link diagram is checkerboard colorable.
\end{example}

Many results in the literature on alternating links require a reduced or simplified diagram; for example \cite{Menasco1984, HTT1, HTT2}. For a classical alternating link on $S^2\subset S^3$, the condition we need is diagrammatic primeness: if an essential curve intersects the diagram exactly twice, then it bounds a region of the diagram containing a single embedded arc. Diagrammatic primeness rules out connected sums of knots and nugatory crossings.
Analogously, a generalized diagram $\pi(L)$ on generalized projection surface $\Pi = \bigcup \Pi_i$ is \emph{weakly prime} if whenever $D\subset \Pi_i$ is a disk with $\bdy D$ intersecting $\pi(L)$ transversely exactly twice, either the disk $D$ contains a single embedded arc, or $\Pi_i$ is a 2-sphere and there is a single embedded arc on $\Pi_i-D$.

\subsection{Weakly generalized alternating link}\label{Sec:WGAKnots}

The diagram $\pi(L)$ on $\Pi$ is said to be a \emph{weakly generalized alternating link diagram} if
\begin{enumerate}
\item\label{Itm:Alternating} $\pi(L)$ is alternating on $\Pi$,
\item $\pi(L)$ is weakly prime,
\item\label{Itm:Connected} $\pi(L)\cap \Pi_i \neq \emptyset$ for each component $\Pi_i$ of $\Pi$.
\item\label{Itm:slope} each component of $L$ projects to at least one crossing in $\pi(L)$.
\item\label{Itm:CheckCol} $\pi(L)$ is checkerboard colorable, and
\item\label{Itm:Rep} the representativity $r(\pi(L), \Pi)\geq 4$.
\end{enumerate}
We say that a link $L$ is \emph{weakly generalized alternating} if it has a weakly generalized alternating diagram. 

Note these conditions were originally enumerated by Howie~\cite{how15t}, and in some sense are as general as possible to obtain prime alternating diagrams on embedded surfaces in $S^3$ in Howie's setting. Perhaps the most mysterious condition is item~\refitm{Rep}, the representativity condition. The restriction on representativity means that small surfaces such as compressing disks and meridianal annuli must be split by the projection surface $\Pi$ into disks parallel to $\Pi$, and thus they can be isotoped to lie entirely in $N(\Pi)$. The representativity condition is used throughout Howie--Purcell~\cite{HowiePurcell}, and consequently used throughout this paper. Recall as well that classical alternating knots and virtual alternating knots have infinite representativity; the representativity condition only needs to be checked when $\Pi$ is compressible, for example if it is a Heegaard surface.

From now on, every link we consider will be weakly generalized alternating.
Note that a classical reduced, prime, alternating diagram of a link $L$ on $\Pi=S^2$ in $S^3$ is an example of a weakly generalized alternating link diagram. The example of \reffig{CheckColorable}, right, is also a weakly generalized alternating link diagram on $\Pi = T^2\times\{0\}$ in $Y=T^2\times[-1,1]$.

These conditions are enough to guarantee that the link exteriors are irreducible and boundary irreducible~\cite[Corollary~3.16]{HowiePurcell}, which we will use below.

\section{Decomposition of a link complement into chunks}\label{Sec:Chunks}

Knot and link complements alternating on the projection sphere in $S^3$ have a well-known decomposition into topological polyhedra, suggested by W.~Thurston and described by Menasco \cite{men83}; see also \cite{lac00} or \cite[Section~11.1.1]{Purcell:HKT}. A more general decomposition into angled blocks was defined by Futer and Gu{\'e}ritaud \cite{fg09}. This was generalized further by Howie and Purcell in \cite{HowiePurcell} for weakly generalized alternating links. We review this generalization in this section.

\subsection{A decomposition of weakly generalized alternating link complements}\label{Sec:ChunkDecomp}
A checkerboard colorable diagram admits two checkerboard surfaces, one white and one shaded. 
After choosing a checkerboard coloring of the diagram, the white checkerboard colored surface is obtained by taking a surface corresponding to each white region of the diagram, and connecting these surfaces by twisted bands at crossings. Thus it embeds in the (open) link complement. Its intersection with the (compact) link exterior is a properly embedded surface with boundary on $\bdy N(L)$. 
Similarly for the shaded surface. The decomposition of weakly generalized alternating link complements is obtained by cutting along the white and shaded checkerboard surfaces. 
Precisely, remove an open regular neighborhood of the checkerboard surfaces from the link exterior.
Since the regions of these surfaces tile $\Pi$, this cuts $Y-N(L)$ into components corresponding to $Y-N(\Pi)$, with $\bdy N(\Pi)$ decorated with portions of checkerboard surfaces and remnants of $\bdy N(L)$.
The checkerboard surfaces intersect exactly at crossing arcs. After cutting, each crossing arc gives rise to four ideal edges lying on $\bdy(N(\Pi))$, two in each of $\Pi^{\pm}$. Strands of the knot corresponding to overcrossings (resp.\ undercrossings) become ideal vertices in $\Pi^+$ (resp.\ in $\Pi^-$); we contract each of these to lie at a crossing of the diagram. 
More details are in \cite{HowiePurcell}.

We record here the results of Propositions~3.1 and~3.3 of \cite{HowiePurcell}, which prove that the decomposition has the following properties.
The components of the decomposition, which are called \emph{chunks}, are homeomorphic to connected components of $Y- N(\Pi)$. These are 3-manifolds with boundary, where the boundary components are components of $\bdy Y$, along with $\Pi_i^-$ and $\Pi_i^+$. Note that each $\Pi_i$ lies on the boundary of one or two chunks, appearing as $\Pi_i^+$ and $\Pi_i^-$. When $\Pi$ is connected, we have one or two chunks in total, depending on whether $\Pi$ is separating or not. For example if $\Pi$ is the usual projection sphere $S^2$ for a classical alternating link diagram, there are two chunks, each a ball component of $S^3-N(S^2)$.

Decorate the surfaces $\Pi_i^-$ and $\Pi_i^+$ with:
\begin{enumerate}
\item A copy of the edges of the link diagram on $\Pi_i$. These will be called \emph{interior edges} of a chunk, and correspond to crossing arcs in $Y-N(L)$. There will be four such ideal edges in $\bdy N(\Pi)$ per crossing arc, two in each of $\Pi^{\pm}$.
\item Interior edges meet at crossings of the diagram, which become \emph{ideal vertices}. That is, crossings become vertices, which are removed (ideal).
\item Regions bounded by edges and vertices will be called \emph{faces} of the chunk. They are not necessarily simply connected. The faces correspond to regions of the diagram, and will be checkerboard colored. For example, the outer region of \reffig{CheckColorable} is an annulus, so gives rise to a face that is an annulus.
\end{enumerate}
An example of the chunk decomposition for the link from \reffig{CheckColorable}, right, is shown in \reffig{ChunkDecomp1}. The two chunks are the components of the complement of $T^2\times(-\epsilon, \epsilon)$ in $T^2\times[-1,1]$ for some $\epsilon>0$. Hence they are homeomorphic to $T^2\times[-1, -\epsilon]$ and $T^2\times[\epsilon,1]$. Edges, faces, and ideal vertices are marked on $T^2\times\{-\epsilon\}$ and $T^2\times\{\epsilon\}$, respectively, with faces shown checkerboard colored. 

\begin{figure}[h]
  \includegraphics{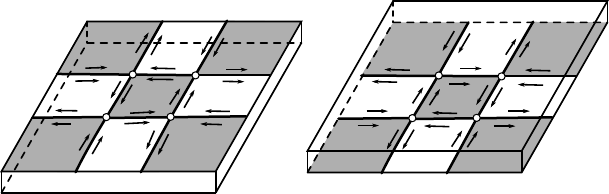}
  \caption{An example of a chunk decomposition. On the left is a manifold homeomorphic to $T^2\times[-1,-\epsilon]$ with faces, edges, and ideal vertices marked on $T^2\times\{-\epsilon\}$. On the right is a manifold homeomorphic to $T^2\times[\epsilon,1]$ with faces, edges, and ideal vertices marked on $T^2\times\{+\epsilon\}$.}
  \label{Fig:ChunkDecomp1}
\end{figure}

Associated to a weakly generalized alternating link is a chunk decomposition corresponding to a weakly generalized alternating diagram constructed as above. We will assume throughout that this is the chunk decomposition used in this paper.

\subsection{Gluing}\label{Sec:Gluing}
To obtain $Y- L$ from the chunks, each face of $\Pi_i^-$ should be glued to the corresponding face of $\Pi_i^+$. A face $F^-$ of $\Pi_i^-$ corresponds to a face $F^+$ of $\Pi_i^+$ if the same region of $\pi(L)$ gave rise to $F^-$ and $F^+$.
The gluing is by the identity away from boundary components of the faces, i.e.\ away from the edges of the chunk(s). In a neighborhood of a boundary component of the face, the gluing rotates the ideal edges by one notch either clockwise or counterclockwise, depending on whether the face is white or shaded. Namely, when viewing $\Pi_i^{\pm}$ from $Y-N(\Pi)$ near $\Pi_i^+$, an ideal edge in a white face will rotate to the next ideal edge of the same face in the clockwise direction for the gluing, and similarly for shaded faces in the counterclockwise direction. The arrows on the faces of the chunk in \reffig{ChunkDecomp1} indicate the gluing.

Under the gluing, four interior edges glue to a single \emph{crossing arc} in $Y- L$.
The crossing arc is identified to two edges each on $\Pi^-$ and $\Pi^+$, with the edges meeting as opposite edges at a vertex. This is illustrated in \reffig{CrossingArc}.

\begin{figure}
\begingroup%
  \makeatletter%
  \providecommand\color[2][]{%
    \errmessage{(Inkscape) Color is used for the text in Inkscape, but the package 'color.sty' is not loaded}%
    \renewcommand\color[2][]{}%
  }%
  \providecommand\transparent[1]{%
    \errmessage{(Inkscape) Transparency is used (non-zero) for the text in Inkscape, but the package 'transparent.sty' is not loaded}%
    \renewcommand\transparent[1]{}%
  }%
  \providecommand\rotatebox[2]{#2}%
  \newcommand*\fsize{\dimexpr\f@size pt\relax}%
  \newcommand*\lineheight[1]{\fontsize{\fsize}{#1\fsize}\selectfont}%
  \ifx\svgwidth\undefined%
    \setlength{\unitlength}{216.19116211bp}%
    \ifx\svgscale\undefined%
      \relax%
    \else%
      \setlength{\unitlength}{\unitlength * \real{\svgscale}}%
    \fi%
  \else%
    \setlength{\unitlength}{\svgwidth}%
  \fi%
  \global\let\svgwidth\undefined%
  \global\let\svgscale\undefined%
  \makeatother%
  \begin{picture}(1,0.52271291)%
    \lineheight{1}%
    \setlength\tabcolsep{0pt}%
    \put(0,0){\includegraphics[width=\unitlength,page=1]{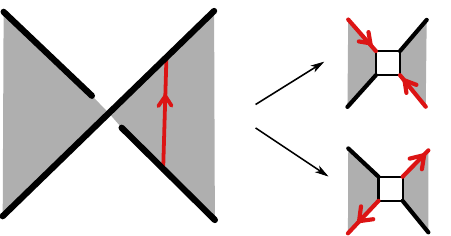}}%
    \put(0.96133115,0.46164811){\color[rgb]{0,0,0}\makebox(0,0)[lt]{\lineheight{1.25}\smash{\begin{tabular}[t]{l}$\Pi^+$\end{tabular}}}}%
    \put(0.97204436,0.00026956){\color[rgb]{0,0,0}\makebox(0,0)[lt]{\lineheight{1.25}\smash{\begin{tabular}[t]{l}$\Pi^-$\end{tabular}}}}%
  \end{picture}%
\endgroup%

  \caption{Four edges are identified to a crossing arc, with two on each of $\Pi^-$ and $\Pi^+$. The edges meet as opposite edges at a vertex.}
  \label{Fig:CrossingArc}
\end{figure}

Now truncate the ideal vertices of chunks: this replaces an ideal vertex with a quadrilateral \emph{truncation face}. We call an edge bordering a truncation face a \emph{truncation edge}. The truncation faces correspond to crossings of the link diagram, and tile the boundary torus $\bdy N(L)$ with a \emph{harlequin tiling}. See \reffig{HarlequinTiling}.
Note the slightly different terminology here: such faces and edges are called boundary faces and edges in \cite{HowiePurcell}.

\begin{figure}[h]
\includegraphics{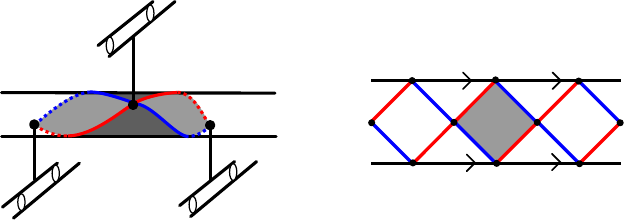}
  \caption{On the left is shown a single truncation face (shaded) as it appears embedded in the link complement. On the right, the boundary torus of the link has been unrolled into an annulus; truncation faces are shown.}
  \label{Fig:HarlequinTiling}
\end{figure}

A chunk with truncated ideal vertices is called a \emph{truncated chunk}. Below, if we refer to faces of a chunk, we mean both truncation faces and other faces. Similarly, if we refer to an edge of a chunk, we mean either a truncation edge or an interior edge. We note that under the gluing described above, the truncation faces that are adjacent to an interior face $I$ are also rotated one notch around $I$, either clockwise or counterclockwise, together with the boundary of $\partial I$.

\section{Normal surfaces}\label{Sec:NormalSfc}

Both normal and standard position begin with a surface being transverse: normal surfaces are transverse to faces, edges, and vertices of tetrahedra in a triangulation of a 3-manifold, and surfaces in standard position are transverse to a projection surface of a link away from crossings. Here we review surfaces that are normal with respect to a chunk decomposition, introduced in \cite{HowiePurcell}, which generalizes both of the above.

\begin{assumption}
We use the topological definitions of incompressible, boundary incompressible, and essential surfaces:
\begin{itemize}
\item A surface $Z$ that is neither a disk nor a 2-sphere is incompressible in a 3-manifold $M$ if, for any embedded disk $D\subset M$ with $D\cap Z = \bdy D$, $\bdy D$ bounds a disk in $Z$. A surface that is not incompressible admits an essential compression disk, namely an essential disk $D\subset M$ with $D\cap Z=\bdy Z$, for which $\bdy D$ is an essential simple closed curve on $Z$.
\item A boundary compression disk for a properly embedded non-disk surface $Z$ in $M$ is an embedded disk $D\subset M$ with $\bdy D$ consisting of two arcs: $\alpha = D\cap Z$ and $\beta=D\cap \bdy M$, with $\alpha \cap \beta = \bdy\alpha = \bdy \beta$. The boundary compression disk is essential if $\alpha$ does not cobound a disk in $Z$ with another arc in $\bdy Z$. If there exists an essential boundary compression disk for $Z$, then $Z$ is boundary compressible. Otherwise, it is boundary incompressible. 
\item A 2-sphere is incompressible if and only if it does not bound a 3-ball.
\item By convention, disks will be neither incompressible nor compressible, and neither boundary incompressible nor boundary compressible in this paper. 
\item A surface is essential if it is incompressible, boundary incompressible, and not boundary parallel.
\item Note we allow $Z$ to be orientable or nonorientable, with or without boundary.
\end{itemize}
\end{assumption}

\subsection{Normal surface in a chunk}

First, consider a surface $Z'$, possibly with boundary, properly embedded in a truncated chunk $C$, with $\bdy Z'\subset \bdy C$.

\begin{definition}\label{Def:NormalSurface}
The surface $Z'$ is \emph{normal} with respect to the chunk $C$ if it satisfies the following.
\begin{enumerate}
\item[(0)] Each non-disk component of $Z'$ is incompressible in $C$.
\item[(1)] $Z'$ and $\bdy Z'$ are transverse to all faces, edges, and vertices of $C$.
\item[(2)] If a component of $\bdy Z'$ lies entirely in a face of $C$, then it does not bound a disk in that face.
\item[(3)] If an arc $\gamma$ of $\bdy Z'$ in a face of $C$ has both endpoints on the same edge, then the arc $\gamma$ along with an arc of the edge cannot bound a disk in that face.
\item[(4)] If an arc $\gamma$ of $\bdy Z'$ in an interior face of $C$ has one endpoint on a truncation edge and the other on an adjacent interior edge, then
the union of $\gamma$ as well as adjacent arcs of the two edges cannot bound a disk in the interior face.
\end{enumerate}
\end{definition}

Example of arcs that make a surface fail to be normal are shown in \reffig{NonNormal}.

\begin{figure}[h]
  \includegraphics{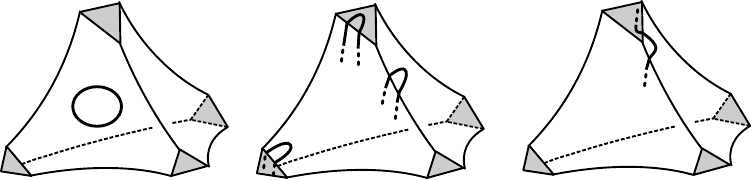}
  \caption{Shown are arcs of $\bdy Z'\cap \bdy C$ that make the surface $Z'$ fail to be normal. On the left, the surface fails (2), in the middle it fails (3), on the right it fails (4).}
  \label{Fig:NonNormal}
\end{figure}

\subsection{Normal surface with respect to a chunk decomposition}
Given a chunk decomposition of $Y- L$, a surface $Z$ embedded in $Y- L$ is \emph{normal} with respect to the chunk decomposition if for every chunk $C$, the intersection $Z\cap C$ is a (possibly disconnected) normal surface in $C$.

Here and further we subdivide a surface $Z$ in subsurfaces $Z_i$ cut out by faces of the chunks. We assume that each $Z_i$ is connected, closed or with boundary, and possibly with multiple boundary components. All our surfaces and subsurfaces from now on will be embedded. Moreover, we will assume that if the surface $Z$ has boundary and $Y$ has boundary, then the boundary of $Z$ lies on $N(L)$ and is disjoint from $\bdy Y$. This includes surfaces of any slope on $L$, as well as closed surfaces.

It was shown in Theorem~3.8 of \cite{HowiePurcell} that any essential surface in a 3-manifold with a chunk decomposition can be put into normal form with respect to the chunk decomposition; see also \cite[Theorem~2.8]{fg09}. 
We will revisit this proof below in \refthm{Normal} to show that the process of putting surfaces into normal form preserves other desirable features of the surface.

We note that item~(0) of \refdef{NormalSurface} is slightly stronger than the definition in \cite[Definition~3.7]{HowiePurcell}, in that we require all non-disk components $Z_i$, and not just closed components, to be incompressible. However, we will see in \refthm{Normal} below that any surface that is normal with respect to the definition of \cite{HowiePurcell} can be isotoped to be normal with respect to \refdef{NormalSurface}.

\section{Saddles, meridian punctures, and boundary of the surface}\label{Sec:SBP}

For normal subsurfaces $Z_j$ of the surface $Z$, each boundary component of $\bdy Z_j$ runs over truncation edges and interior edges of the respective chunk. In this section we will describe a labeling of the components of $\bdy Z_j$ by letters $P$, $S$, and $B$, analogous to similar labelings in \cite{Menasco1984, HTT1, HTT2}.

In these other papers, an essential surface $Z$ in an alternating link complement in $S^3$ is studied by considering its curves of intersection with the projection sphere and with small balls around crossings called \emph{crossing balls}.
The surface in standard position from \cite{Menasco1984, HTT1, HTT2} intersects the projection sphere where $Z$ has \emph{saddles} inside crossing balls. See \reffig{Saddle} left.
In \cite{Menasco1984, HTT1, HTT2}, each saddle is labeled with the letter $S$.

For a weakly generalized alternating link, if a surface $Z$ has a saddle, that saddle intersects a crossing arc exactly once. Recall from subsection~\ref{Sec:Gluing} that each interior edge of the chunk decomposition is identified to exactly one crossing arc, with four interior edges in total identified to a single crossing arc. Thus saddles meet interior edges as in \reffig{Saddle}, right. Shown in that figure are the four edges that glue up to the crossing arc (red), and the way the saddle surface meets them.
For a surface in normal form, we consider how the surface intersects interior edges and truncation edges. Because interior edges are identified to crossing arcs, and each such arc runs through exactly one saddle, analogous to the notation of \cite{Menasco1984}, we label each intersection of $\bdy Z_j$ and an interior edge with an $S$.

\begin{figure}[h]
  \includegraphics{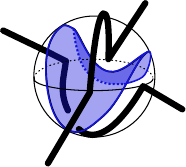}
  \hspace{.5in}
\begingroup%
  \makeatletter%
  \providecommand\color[2][]{%
    \errmessage{(Inkscape) Color is used for the text in Inkscape, but the package 'color.sty' is not loaded}%
    \renewcommand\color[2][]{}%
  }%
  \providecommand\transparent[1]{%
    \errmessage{(Inkscape) Transparency is used (non-zero) for the text in Inkscape, but the package 'transparent.sty' is not loaded}%
    \renewcommand\transparent[1]{}%
  }%
  \providecommand\rotatebox[2]{#2}%
  \newcommand*\fsize{\dimexpr\f@size pt\relax}%
  \newcommand*\lineheight[1]{\fontsize{\fsize}{#1\fsize}\selectfont}%
  \ifx\svgwidth\undefined%
    \setlength{\unitlength}{216.19116211bp}%
    \ifx\svgscale\undefined%
      \relax%
    \else%
      \setlength{\unitlength}{\unitlength * \real{\svgscale}}%
    \fi%
  \else%
    \setlength{\unitlength}{\svgwidth}%
  \fi%
  \global\let\svgwidth\undefined%
  \global\let\svgscale\undefined%
  \makeatother%
  \begin{picture}(1,0.52271291)%
    \lineheight{1}%
    \setlength\tabcolsep{0pt}%
    \put(0,0){\includegraphics[width=\unitlength,page=1]{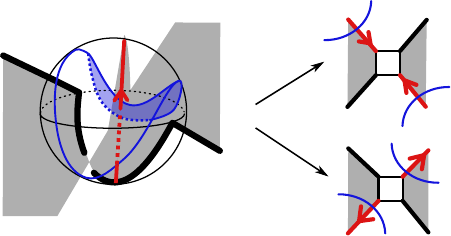}}%
    \put(0.96133108,0.46164811){\color[rgb]{0,0,0}\makebox(0,0)[lt]{\lineheight{1.25}\smash{\begin{tabular}[t]{l}$\Pi^+$\end{tabular}}}}%
    \put(0.97204428,0.0002696){\color[rgb]{0,0,0}\makebox(0,0)[lt]{\lineheight{1.25}\smash{\begin{tabular}[t]{l}$\Pi^-$\end{tabular}}}}%
    \put(0,0){\includegraphics[width=\unitlength,page=2]{SaddleChunk.pdf}}%
  \end{picture}%
\endgroup%

\caption{Left: A saddle runs between arcs of the diagram at a crossing. Right: This leads to $Z$ intersecting interior edges of the chunk diagram. Shown are intersections of interior edges on $\Pi^+$ and $\Pi^-$.}
\label{Fig:Saddle}
\end{figure}

Recall that a surface $Z$ in a link exterior $Y-N(L)$ is \emph{meridianally compressible} if there is a disk $D$ embedded in $Y$ such that $D\cap Z = \bdy D$, the interior of $D$ intersects $L$ exactly once transversely, and $\bdy D$ is not parallel through (an annulus in) $Z$ to a meridian of $\bdy N(L)$. Such a disk $D$ is called a meridianal compression disk.
Otherwise $Z$ is \emph{meridianally incompressible}. For a meridianally compressible surface, performing surgery along $D-N(L)$ yields a new surface whose boundary is the union of $\bdy Z$ and two meridians on $\bdy N(L)$; this is called a \emph{meridianal compression} of $Z$.

Both for classical alternating links in $S^3$~\cite{Menasco1984} and for weakly generalized alternating links under certain conditions~\cite[Lemma~4.9]{HowiePurcell}, every closed surface $Z'$ is meridianally compressible. After meridianal compressions the resulting surface $Z$ meets the diagram in meridians. In \cite{Menasco1984, HTT1} for links in $S^3$, each meridian on $\bdy Z$ is labeled with a $P$, which stands for a meridianal \textit{puncture}. For weakly generalized alternating links, when $\bdy Z$ again consists only of meridians, each meridian will intersect two truncation faces. We also wish to label intersections with a $P$,
but the setup will be slightly different. To describe our labelling, we first need the following definition.

\begin{definition}\label{Def:MeridianalForm}
Isotope $Z$ to meet the diagram $\pi(L)$ transversely away from crossings.
After this isotopy, in the chunk decomposition each meridianal curve of $\bdy Z$ meets exactly two truncation faces, which are quadrilaterals. It runs through adjacent truncation edges on each face, cutting off a single corner of each of two quads in the harlequin tiling of the boundary; see \reffig{MeridianalForm}. Moreover, note that two opposite vertices of a truncation face are always identified (they lie on the same endpoint of a crossing arc). The meridian does not cut off such a vertex, but rather cuts off one of the other two vertices of a truncation face. We say that a component of $\bdy Z$ is in \emph{meridianal form} if each component of $\bdy Z$
\begin{enumerate}
\item meets exactly two truncation faces, one on each side of the projection surface;
\item runs between adjacent edges in each such truncation face, cutting off a vertex that is not identified to another corner on the same truncation face.
\end{enumerate}
\noindent See \reffig{MeridianalForm}.
We say that $Z$ is in meridianal form if each component of $\bdy Z$ is in meridianal form.
\end{definition}

\begin{figure}[h]
\begingroup%
  \makeatletter%
  \providecommand\color[2][]{%
    \errmessage{(Inkscape) Color is used for the text in Inkscape, but the package 'color.sty' is not loaded}%
    \renewcommand\color[2][]{}%
  }%
  \providecommand\transparent[1]{%
    \errmessage{(Inkscape) Transparency is used (non-zero) for the text in Inkscape, but the package 'transparent.sty' is not loaded}%
    \renewcommand\transparent[1]{}%
  }%
  \providecommand\rotatebox[2]{#2}%
  \newcommand*\fsize{\dimexpr\f@size pt\relax}%
  \newcommand*\lineheight[1]{\fontsize{\fsize}{#1\fsize}\selectfont}%
  \ifx\svgwidth\undefined%
    \setlength{\unitlength}{360bp}%
    \ifx\svgscale\undefined%
      \relax%
    \else%
      \setlength{\unitlength}{\unitlength * \real{\svgscale}}%
    \fi%
  \else%
    \setlength{\unitlength}{\svgwidth}%
  \fi%
  \global\let\svgwidth\undefined%
  \global\let\svgscale\undefined%
  \makeatother%
  \begin{picture}(1,0.292892)%
    \lineheight{1}%
    \setlength\tabcolsep{0pt}%
    \put(0,0){\includegraphics[width=\unitlength,page=1]{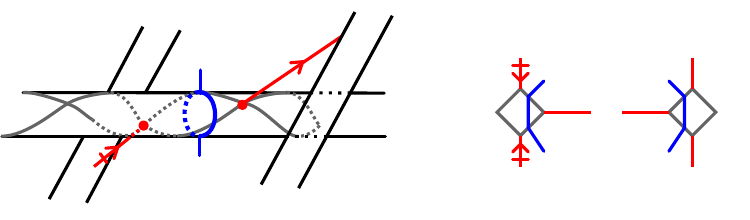}}%
    \put(0.75669826,0.16679205){\color[rgb]{0,0,0}\makebox(0,0)[lt]{\lineheight{1.25}\smash{\begin{tabular}[t]{l}$\Pi^+$\end{tabular}}}}%
    \put(0,0){\includegraphics[width=\unitlength,page=2]{MeridinalForm.pdf}}%
    \put(0.96561298,0.16568718){\color[rgb]{0,0,0}\makebox(0,0)[lt]{\lineheight{1.25}\smash{\begin{tabular}[t]{l}$\Pi^-$\end{tabular}}}}%
  \end{picture}%
\endgroup%

  \caption{Left: Isotope $Z$ with meridianal boundary to meet $N(L)$ transversely away from crossings. Right: In the chunk decomposition, such a curve meets exactly two truncation faces, and cuts off a single corner of each truncation face. Note that the corner cut off by a meridian is not one of the corners that are identified.
  }
  \label{Fig:MeridianalForm}
\end{figure}

Suppose $Z$ is a surface in meridianal form. Assign a label $P$ to each intersection of $\bdy Z$ with a truncation face.
Note just as for a classical alternating link in $S^3$, a meridianal puncture in $Z$ corresponds to a single $P$ above the diagram on $\Pi^+$, and a single $P$ below on $\Pi^-$.

For a spanning surface $Z$ of an alternating link in $S^3$, Hass, Thompson, and Tsvietkova~\cite{HTT2} introduce the letters $B$. A letter $B$ indicates where $Z$ intersects the link transversally on the projection sphere.
In the case of weakly generalized alternating links, a surface $Z$ with boundary will have normal subsurfaces $Z_j$ with $\bdy Z_j$ meeting truncation faces. When the corresponding curve on $\bdy Z$ is not necessarily a meridianal curve in meridianal form, we label each intersection of $\bdy Z_j$ with a truncation edge by $B$. Note that above, we labeled intersections of $\bdy Z_j$ and truncation faces by $P$. For surfaces with non-meridianal boundary, we consider intersections with truncation edges rather than faces. The letter $B$ is a reminder that the surface might have boundary components that are not meridianal.

The labeling of components of $\bdy Z_i$ (which are curves) by letters $S$, $B$, or $P$ associates a cyclic word to every such curve. An example is shown in \reffig{SBPCurves}.

\begin{figure}[h]
  \import{figures/}{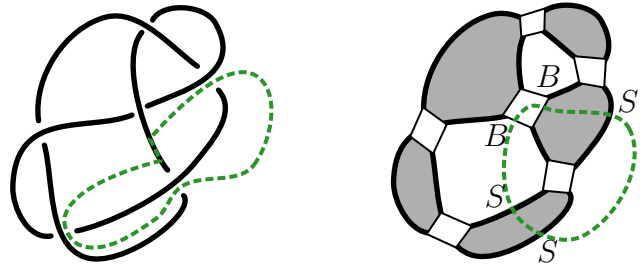}
  \caption{On the left is shown an example of a $BBSSS$ curve from \cite{HTT2}. On the right is shown the corresponding $BBSSS$ curve in a chunk of the decomposition of the same link. The curve on the right meets the same saddles (interior edges) and link overcrossings (truncation faces) as the curve on the left.}
  \label{Fig:SBPCurves}
\end{figure}

\begin{remark}\label{Rem:BBReplacesP}
  In fact, the arguments below for surfaces with labels $B$ on truncation edges work equally well when we replace an instance of $P$ with two instances of $B$, and so we allow curves with $B$ labels also to be in meridianal form. This will be useful for links, for example, when some components of $\bdy Z$ are meridians and some are not. In this mixed case, with some meridian boundary components and some non-meridians, we will label all intersections with truncation edges by $B$. However, when a surface has strictly meridianal boundary, we obtain more information using only labels $P$. We do not allow words in both $P$ and $B$.
\end{remark}

For classical alternating links, the curves of intersection subdivide the surface into disks lying in topological 3-balls above and below the link. The curves of intersection and the position of the disks are uniquely determined by the associated words in $S$, $P$, and $B$ and by the position of each letter on the link diagram. The disks are then glued along their boundaries, and the gluing pattern is also uniquely determined by the these words in $S$, $P$ and $B$ and by the position of each letter on the link diagram. For weakly generalized alternating links, there are also words associated to boundary components of $Z_i$, and the position of each letter on the knot projection. But the boundary curves subdivide the surface $Z$ into subsurfaces $Z_i$ that might have positive genus, and multiple boundary components. Moreover, the subsurfaces are not in topological 3-balls anymore: rather, they are in chunks which might have complicated topology themselves.


\section{Normal form, saddles, and meridians}\label{Sec:Meridianal}

For a surface $Z$, associate a pair of natural numbers $(s(Z), t(Z))$ where $s(Z)$ is the number of intersections of $Z$ with interior edges, and $t(Z)$ is the number of intersections with truncation edges. Call the pair $(s(Z), t(Z))$ the \emph{weight} of the embedding with respect to a chunk decomposition, and order lexicographically.
In this section, we ensure that a surface can be isotoped into normal form while preserving meridianal form for the surface, by an isotopy that does not increase weight. Note that there are many (isotopic) ways to put a surface in normal form or standard position, and in applications, often the one that minimizes weight is chosen.

For convenience, we collect assumptions here. They will be used throughout the rest of the paper, and will be part of the hypothesis in each lemma, proposition and theorem we prove.

\begin{assumption}
\begin{itemize}
\item Let $Y$ and $\Pi$ satisfy Assumption~\ref{Assumption:3mfld}. We assume that $\pi(L)$ is a weakly generalized alternating projection of a link $L$ onto $\Pi$ in $Y$.
\item We consider $(Z,\bdy Z)$ to be an essential surface (either orientable or nonorientable) embedded in $(Y-N(L), \bdy N(L))$, where the boundary of $Z$ is possibly empty. Note the boundary of $Z$ (if any) is disjoint from $\bdy Y$.

\item We also assume that there is a chunk decomposition of $Y-N(L)$ arising from a weakly generalized alternating diagram as in subsection~\ref{Sec:ChunkDecomp}. The $Z_i$ are the connected subsurfaces of $Z$ cut out by chunks, each closed or with boundary, and possibly with multiple boundary components. 
\end{itemize}
\end{assumption}

\begin{theorem}\label{Thm:Normal}
A surface $Z$ in $Y-N(L)$ can be isotoped into normal form such that:
\begin{enumerate}
\item[(a)] The full isotopy consists of a sequence of sub-isotopies given by discrete steps, and no step of the isotopy increases the weight. 
\item[(b)] If $Z$ begins in meridianal form, then after the isotopy into normal form, $Z$ remains in meridianal form.
\end{enumerate}
\end{theorem}

\begin{proof}
The proof that an isotopy exists putting the surface into normal form is a standard innermost disk / outermost arc argument that is very similar to arguments that appear elsewhere (see \cite{HowiePurcell, fg09}), but we walk through it to verify (a) and (b). 

We need to check the requirements of \refdef{NormalSurface}. 
A small isotopy of $Z$ ensures transversality conditions~(1). We may ensure this isotopy does not increase the number of intersections with any edge, for~(a), and does not affect meridianal form for~(b).

Since $Z$ is essential, condition~(0) automatically holds for closed surfaces in any chunk, without any isotopy. If $D$ is an essential compressing disk for $Z\cap C$ within a chunk $C$, then because $Z$ is incompressible, $\bdy D$ bounds a disk $E$ in $Z$, and by irreducibility of $Y$, $D\cup E$ bounds a ball. Hence $E$ can be isotoped through the ball and past $E$ to remove the compressing disk. For~(a), we say that this full isotopy through the ball is one step. Note the weight may increase temporarily during the isotopy through the ball, but when this step of the isotopy is completed, the number of intersections of $Z$ with interior edges has not increased. The isotopy does not affect the boundary of $Z$ at all, completing~(a) and giving~(b).

For condition~(2) of \refdef{NormalSurface}, suppose some $Z_k$ has boundary $\bdy Z_k$ lying in a face of $C$ and bounding a disk in that face. Take an innermost such disk $D$. It is not an essential compressing disk for $Z$ in $Y$ because $Z$ is incompressible. Because $Y$ is irreducible, there is a ball $B$ with $D\subset \bdy B$ and $\bdy B-D\subset Z$. Isotope $Z$ through $B$ and slightly further, removing all intersections of $Z$ with faces and interior edges that lie within the ball $B$, and removing the intersection of $\bdy Z_k$ on a face of $C$, all without changing $Z$ outside a small neighborhood of $B$. Again the full isotopy through $B$ is considered to be one step, for~(a). Note this move cannot increase the number of intersections of $Z$ with interior edges, and avoids the boundary of $Z$ entirely, so it cannot increase the weight for~(a), and does not affect meridianal form for~(b).

For condition~(3) of \refdef{NormalSurface}, suppose for some $Z_k$, an arc of $\bdy Z_k$ lies in a single face with endpoints on the same edge, and together with a part of the edge cuts off a disk $D$ in that face. Assume $D$ is innermost with this property in that face, meaning its interior does not intersect $Z$.

There are three cases depending on the type of edge and the type of face.

Suppose first that the arc of $\bdy Z_k$ has endpoints on the same interior edge. Then use a regular neighborhood of the disk $D$ to isotope $Z$ past that edge (a step of the isotopy), strictly reducing the number of intersections with the interior edge, and not affecting $\bdy Z$, giving~(a) and~(b).

Next suppose that the arc of $\bdy Z_k$ lies in a truncation face, with both endpoints on the same truncation edge. Then again use a regular neighborhood of $D$ in $Y-N(L)$ to isotope $\bdy Z$ along this truncation face and past the edge, removing two intersections with truncation edges. This move is a step of the full isotopy. It does not affect intersections with interior edges, and strictly decreases intersections with truncation edges, giving~(a). Such an arc is not in meridianal form, so does not arise for~(b).

Finally suppose the arc $\alpha$ of $\bdy Z_k$ has both endpoints on the same truncation edge but lies in an interior face. Then the disk $D$ has the form of a boundary compression disk for $Z$. Since $Z$ is boundary incompressible, there must be an arc $\beta\subset \bdy Z$ whose endpoints agree with those of $\alpha$, and such that $\alpha\cup \beta$ bounds a disk $D'$ in $Z$. Then $D\cup D'$ forms a disk with boundary on $\bdy N(L)$. Because $Y- N(L)$ is boundary irreducible, by \cite[Corollary~3.16]{HowiePurcell}, the disk $D\cup D'$ must co-bound
a ball with a disk of $\bdy N(L)$. Use this ball to isotope $D'$ in $Z$ past $D$ to remove the two intersections with the truncation face, as well as any other intersections of $Z$ with edges and faces within the ball.
For~(a), the isotopy through this ball is a step, and when complete, the number of intersections with interior and truncation edges does not increase. As for~(b), the arc $\beta$ of $\bdy Z$ cannot be in meridianal form, since both its endpoints are on the same truncation edge, so this situation does not arise for~(b). 

Condition~(4) of \refdef{NormalSurface} requires the most care, since it could arise for surfaces in meridianal form. Nevertheless, we show that an isotopy also addresses this case while preserving meridianal form and without increasing weight. The argument requires careful consideration of the combinatorics of the chunk decomposition around a truncation face and an adjacent interior edge. We step through it below, using a number of figures.

Suppose an arc $\gamma$ of $\bdy Z_k$ in an interior face of a chunk $C$ has an endpoint on a truncation edge and an endpoint on an adjacent interior edge, cutting off a disk $D$ with these two edges. 
We may take $\gamma$ to be outermost with this property, so that the interior of $D$ is disjoint from $Z$.
Isotope $Z$ through a regular neighborhood of $D$, by sliding (a neighborhood of an arc of) $\bdy Z$ along the adjacent truncation faces and past the interior edge, then sliding the rest of a neighborhood of $\gamma$ in $Z$ through a neighborhood of $D$ to follow. The effect of this move on the harlequin tiling of the boundary is shown in \reffig{Condition4-BdryView}.
This removes an intersection of $Z$ with an interior edge, giving~(a). This will introduce an arc in a truncation face with both endpoints on the same truncation edge, as in \reffig{Condition4-BdryView}. Such an arc can be eliminated as above, reducing weight. Thus if the surface does not begin in meridianal form, we are done at this point.

\begin{figure}
  \includegraphics{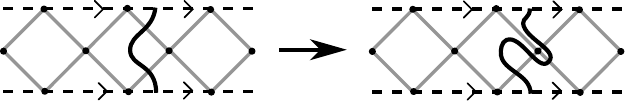}
  \caption{A curve of $\bdy Z$ forming a meridian on the harlequin tiling of the boundary is shown on the left. If we isotope across a disk that violates condition~(4), the curve is changed as shown on the right. Note further isotopy is required to put it into normal form, meridianal form.}
  \label{Fig:Condition4-BdryView}
\end{figure}

To complete the proof of~(b), we need to analyse the isotopy of $Z$ through $N(D)$ more carefully, ensuring that if $D$ is adjacent to a truncation face that $Z$ meets in meridianal form, then after the isotopy the result is still in meridianal form.

Suppose $Z$ is in meridianal form. We set up some notation. Assume that $D\subset \Pi^+$. The arc $\gamma$ of $\bdy Z_k$ that cuts off $D$ continues through the truncation face in meridianal form. Further, because the interior face containing $D$ is glued to another interior face on $\Pi^-$, there is another such arc cutting off a disk $D'\subset \Pi^-$, and that arc extends to run through another truncation face in meridianal form. Indeed, these two arcs in meridianal form together form a meridian boundary component of $\bdy Z$. Finally, on each of $\Pi^+$ and $\Pi^-$ there is another interior edge identified to the one meeting $D$ or $D'$. The surface $Z$ must also run through that interior edge. The setup must therefore appear as in \reffig{NormalMerid}.

\begin{figure}[h]
  \import{figures/}{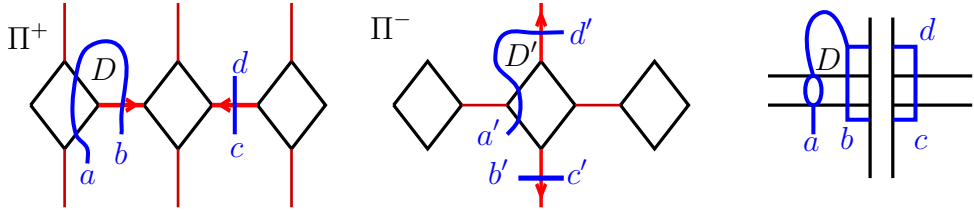}
  \caption{When $Z$ is in meridianal form, and an arc runs from a truncation edge to an adjacent interior edge, $Z$ must intersect $\Pi^+$ and $\Pi^-$ as shown on the left (possibly with roles of $\Pi^+$, $\Pi^-$ switched). The right shows the saddle and the adjacent meridian in $Y-N(L)$.}
  \label{Fig:NormalMerid}
\end{figure}

That is, there are arcs $ab$ and $cd$ of $Z_k\cap \Pi^+$ meeting interior edges identified to the same crossing arc of a saddle, and a subarc of $ab$ is an arc of the boundary of the disk $D$. There are additional arcs $a'd'$ and $b'c'$ in $\Pi^-$, also meeting interior edges identified to the same crossing arc, with an arc of $a'd'$ forming an arc of the boundary of disk $D'$. Moreover, $a$ is glued to $a'$, $b$ to $b'$, $c$ to $c'$ and $d$ to $d'$.

For the leftmost and middle pictures in \reffig{NormalMerid}, note that the endpoints of arcs labeled by $a, b,c,d$ are in the same quadrants as $a', b', c', d'$ respectively. This is because two interior faces that are identified correspond to the same region of the link diagram. Also note that the arcs $ab, cd, a'd', b'c'$ and their intersection pattern with interior edges and truncation edges of the chunks correspond to rotating the boundary of every interior face under the gluing one notch, as described in \refsec{Gluing}.

From the middle picture in \reffig{NormalMerid}, rotate quadrant I one notch counterclockwise and identify it with quadrant I in the leftmost picture. Similarly, rotate quadrant III one notch counterclockwise and identify it with quadrant III in the leftmost picture. Here interior edges are rotating to the next interior edges in the counterclockwise direction, and truncation edges are rotating to the next truncation edge in the counterclockwise direction. Rotate quadrants II and IV one notch clockwise to the leftmost picture.

Now we isotope $Z$ across the disk $D$, to remove intersections with the interior edge identified to the crossing arc. Note this simultaneously isotopes across $D'$ in the other chunk. This isotopes arcs $cd$ and $b'c'$ to run through the adjacent truncation faces; this is shown in \reffig{NormalMeridIsotope1}. Away from these arcs, the intersection of the surface $Z$ with $\Pi^+$ and $\Pi^-$ is unchanged.

\begin{figure}[h]
  \import{figures/}{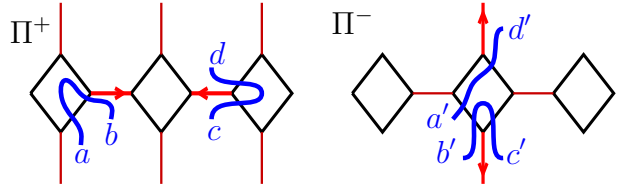}
  \caption{The isotopy through $D$ and $D'$ has the effect shown.}
  \label{Fig:NormalMeridIsotope1}
\end{figure}

Note that the surface is not in normal form; on the left of \reffig{NormalMeridIsotope1}, an arc of $\bdy Z'$ in the truncation face meets the same truncation edge twice. We perform an isotopy to slide this arc off this truncation face and onto the truncation face on $\Pi^-$, shown in the center on the right of \reffig{NormalMeridIsotope1}. 
The final result of this isotopy, in $\Pi^-$, $\Pi^+$ and in $Y-N(L)$, is shown in \reffig{NormalMeridIsotope2}.

\begin{figure}[h]
  \import{figures/}{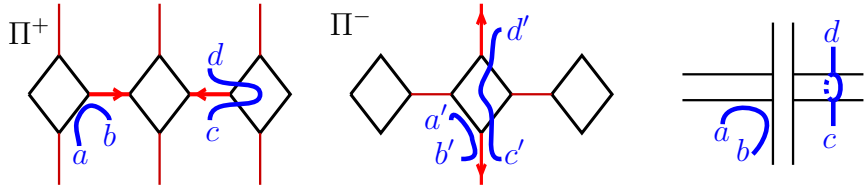}
  \caption{Isotoping further into normal form yields surface in meridianal form.
  }
  \label{Fig:NormalMeridIsotope2}
\end{figure}

The surface is now again in meridianal form, for~(b). In the link complement, the entire procedure swept a meridian past a crossing arc, removing the intersection with the corresponding interior edge without introducing new intersections with interior edges, therefore decreasing weight. We call this a step of the isotopy, and note~(a) also holds for this step.
\end{proof}

\begin{assumption}
From now on, when we put a surface in normal position, we always make two assumptions:
\begin{enumerate}
\item we do it so that the surface is in meridianal form;
\item out of all ways to do it so that the surface is in meridianal form, we choose one with least weight.
\end{enumerate}
\end{assumption}

\section{Generalizing standard position}\label{Sec:Standard}

In this section we generalize properties of standard position for closed surfaces from \cite{Menasco1984}, and for surfaces with non-meridianal boundary from Propositions 2.1-2.2 of \cite{MenascoThistlethwaite}.

\begin{proposition}\label{Prop:TruncationEdgesPairs} 
The following holds for a surface $Z$ in $Y-N(L)$, and any connected subsurface $Z_i$ of $Z$ in a chunk $C$:
\begin{enumerate}
\item[(1)] There is no meridianal compression of $Z$ to a component $R$ of $\bdy N(L)$ for which $\bdy Z \cap R$ is nonempty and non-meridianal. 
\item[(2)] $Z_i$ cannot be a sphere or projective plane.
\item[(3)] If $Z$ is in normal position, each curve of $\bdy Z_i$ meets truncation edges in pairs. Thus labels $B$ occur in pairs.
\end{enumerate}
\end{proposition}

\begin{proof}
For (1), suppose $Z$ is a surface with non-meridianal boundary on a component $R$ of $\bdy N(L)$, and suppose that admits a meridianal compression to $R$. Such a compression defines an embedded annulus $A$ with one boundary component on $Z$ and one on $R$, with interior disjoint from $Z$. But a meridian intersects any non-meridianal closed curve on the link boundary torus. If $\bdy Z$ meets this component $R$ of $\bdy N(L)$ and is non-meridianal, $\bdy Z$ must intersect the annulus $A$, and hence $Z$ meets the interior of $A$. This contradicts the definition of $A$.

For (2): Recall that $\Pi$ was chosen such that $Y- \Pi$ is irreducible, thus any chunk is irreducible. Therefore a spherical component cannot bound anything but a ball. But a 2-sphere is incompressible if it does not bound a ball. Hence item (0) of \refdef{NormalSurface} implies that the intersection of a normal surface with a chunk has no spherical components. Similarly, because $Y-\Pi$ is orientable, the boundary of a regular neighborhood of any embedded incompressible projection sphere would be an essential 2-sphere, which cannot exist in the irreducible $Y-\Pi$.

For (3): Every time $\bdy Z_i$ meets a truncation edge, it runs into a truncation face and then must meet another truncation edge as it runs out of that truncation face.
\end{proof}

The next result is analogous to Menasco's result \cite[Lemma~1]{Menasco1984} that no subsurface in standard position meets the same crossing bubble in more than one arc. See also Menasco and Thistlethwaite~\cite[Proposition~2.2(ii)]{MenascoThistlethwaite}. 

Fix a truncation face on a chunk. The face is adjacent to exactly two interior edges, say $e_1$ and $e_2$, that are identified together. Denote by $\gamma$ the arc formed from the union of $e_1, e_2$ and (a choice of) two truncation edges connecting $e_1$ with $e_2$. See \reffig{Cases2Saddles}(a). We call the connected simplex consisting of these four edges a \textit{saddle complex}. 

\begin{proposition}\label{Prop:NoSaddleTwice}
Suppose $Z$ is meridianally incompressible.
Suppose an arc $\alpha$ of $Z\cap \bdy C$ intersects a saddle complex $\gamma$ in exactly two points. Then $\alpha$ cannot co-bound a disk with a subarc $\beta$ of $\gamma$.
\end{proposition}

\begin{proof}
Suppose the arcs cobound a disk. There are six ways that an arc $\alpha$ might co-bound a disk with a subarc of $\gamma$, shown in Figure~\ref{Fig:Cases2Saddles}.
Take $D$ to be an innermost disk with the property that $\bdy D$ consists of an arc $\alpha$ on $Z\cap \bdy C$ and a subarc $\beta$ of $\gamma$. That is, $D\cap Z = \alpha$. 

\begin{figure}[h]
\import{figures/}{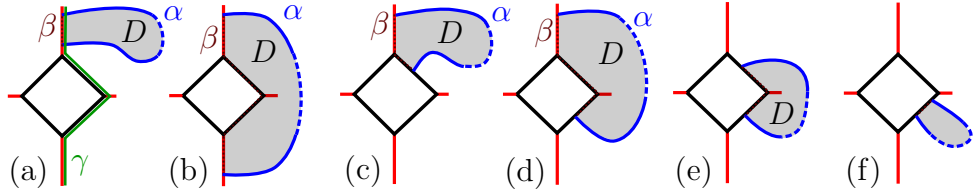}
\caption{Shown are the ways that an arc $\alpha$ of $Z\cap \bdy C$ can co-bound a disk $D$ with a subarc $\beta$ of $\gamma$, where $\gamma$ is a saddle simplex.}
\label{Fig:Cases2Saddles}
\end{figure}

Suppose that for $D$ innermost, $\alpha$ has both endpoints on the same interior edge $e$, as in (a) of \reffig{Cases2Saddles}. 
If $\alpha$ happens to lie in only one face, then $Z$ is not normal and we have already seen that $Z$ can be put into normal form in a way that removes $\alpha$ (\refthm{Normal}).
But it could be the case that $D$ intersects multiple faces and edges. In this case,
we will isotope $Z$ to remove two intersections of $Z$ with $e$, illustrated in \reffig{NoSaddleTwice}.

\begin{figure}[h]
  \import{figures/}{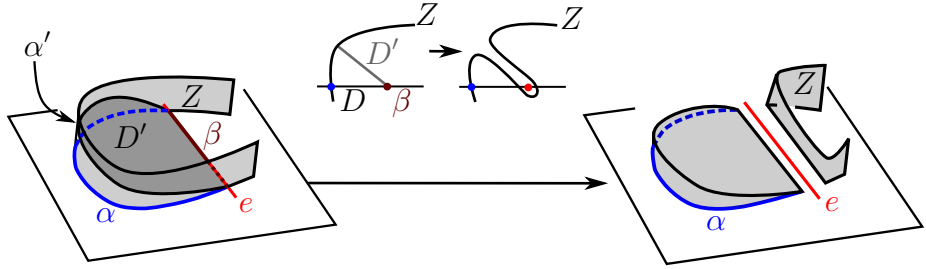}
  \caption{Shows how we isotope $Z$ through the disk $D'$ to remove intersections with $e$, in 3-dimensions and a 2-dimensional cross section. In the leftmost and rightmost figures, $e$ is the edge in red, with $\beta$ slightly darker red on $e$. In the center, a cross-section is depicted, and $e$ becomes a single point. }
  \label{Fig:NoSaddleTwice}
\end{figure}

Carefully, push the disk $D$ slightly into the interior of the chunk, obtaining a new disk $D'$, shown in darker gray on the left of \reffig{NoSaddleTwice}. Then isotope $Z$ in a small regular neighborhood of $D'$, removing two intersections with $e$, as shown on the right of the figure. Note that within $Y-N(L)$, the edge $e$ is identified to other edges to obtain a crossing arc that contains $\beta$, and $\beta$ runs between two saddles of $Z$ intersecting the crossing arc. This move eliminates the intersections of $Z$ at endpoints of $\beta$. Note \reffig{NoSaddleTwice} does not show what happens in $Y-N(L)$, only the chunk decomposition, prior to the edge identifications. We call the move of \reffig{NoSaddleTwice} a \emph{band move}, as in~\cite[Figure~2.3(iii)]{MenascoThistlethwaite}. 

Alternatively, the isotopy of a band move is equivalent to the following move. Take a small product neighborhood of $D'$. This is a ball $B=D'\times(-\epsilon,\epsilon)$, for some sufficiently small $\epsilon>0$. The boundary $\bdy B$ consists of a disk $E$ on $Z$ of the form $E=\alpha'\times(-\epsilon,\epsilon)$, and a second disk $F=\bdy B - E$. Replace the surface $Z$ by removing the disk $E$ from $Z$ and replacing it with the disk $F$. Push slightly through the boundary of the chunk. The resulting surface is isotopic to $F$, but meets the edge $e$ two fewer times.

The resulting surface is still essential and meridianally incompressible, and any of its meridian boundary curves are still in meridianal form, so it can be isotoped further into normal form without increasing intersections with interior edges as in \refthm{Normal}. This is a contradiction, since $Z$  was assumed to have minimal weight.

Next suppose that for $D$ innermost, $\alpha$ has endpoints on distinct interior edges of $\gamma$, as in~(b) of \reffig{Cases2Saddles}.
Then glue interior edges to obtain a crossing arc.
Because the disk $D$ is innermost, the two endpoints of $\alpha$ on interior edges are then identified. The arc of $\beta$ on the truncation faces forms a meridian. Thus after gluing, $\alpha\subset Z$ bounds a meridian compression disk. Because $Z$ is meridianally incompressible, $\alpha$ is parallel through an annulus $A \subset Z$ to a meridian component of $\partial Z$, and together $D-N(L)$ and this annulus $A$ cobound a thickened annulus. Isotope $Z$ through this thickened annulus, past $D$, removing all intersections of $Z$ with faces, edges, and truncation edges within the thickened annulus. 
The number of intersections of $Z$ with interior edges strictly decreases, and the meridian component of $\bdy Z$ slides into meridianal form on this truncation face and an adjacent one. The resulting surface is still essential and meridianally incompressible, and still in meridianal form, so it can be isotoped further into normal form in a manner that does not increase intersections with interior edges, using \refthm{Normal}.
This contradicts the minimum weight assumption.
 
Next suppose $\alpha$ has both endpoints on truncation edges, as in~(e) or~(f) of \reffig{Cases2Saddles}. (Note that in the classical setting, this case is automatically excluded by the definition of standard position~\cite{MenascoThistlethwaite}.) Away from $\beta$, push $D$ slightly into the interior of the chunk $C$, similar to the move on the left of \reffig{NoSaddleTwice}. Then the newly isotoped disk has the form of a boundary compression disk $D'$ for $Z$. 
Since $Z$ is boundary incompressible, we may push off this disk using a standard argument (see for example~\cite[Figure~1.9]{Hatcher:3MfldNotes}), as follows. 
By boundary incompressibility of $Z$, $\bdy D' \cap Z$ must cobound a disk $E\subset Z$ with an arc $\delta$ of $Z\cap \bdy N(L)$. Because $Y-N(L)$ is boundary irreducible by \cite[Corollary~3.16]{HowiePurcell}, $D'\cup E$ must be parallel to a disk $F\subset \bdy N(L)$. Then $D'\cup E\cup F$ is a sphere in $Y-N(L)$ bounding a ball; use the ball to isotope $Z$, moving the disk $E$ through the ball and slightly past $D'$. After this isotopy, $Z$ has at least two fewer intersections with truncation edges, and no aditional intersections with other interior edges, so the weight has strictly decreased. Observe also that the arc $\delta\subset \bdy Z$ could not have been in meridianal form, so this case will not affect meridianal boundary components of $Z$, and hence $Z$ remains in meridianal form after isotopy.  
Then further isotopy as in \refthm{Normal} puts $Z$ back into normal form but with strictly reduced weight, contradicting our assumption that $Z$ had minimal weight. Hence we may assume that an innermost instance of $D$ does not appear as in~(e) or~(f) of \reffig{Cases2Saddles}.

Finally suppose an innermost $D$ has the form of either~(c) or~(d) in \reffig{Cases2Saddles}, with one endpoint of $\alpha$ on an interior edge and one on a truncation edge. We will isotope $Z$ to remove the intersection with the interior edge in two steps. The first step is to slide $\bdy Z$ through $\bdy N(L)$ in $Y-N(L)$. For~(c), we slide through a neighborhood of $\beta\cap \bdy N(L)$ as shown on the left of \reffig{NoSaddleAdjTruncFace}. For~(d), note that because $D$ is innermost, the arc of $Z\cap \bdy C$ that meets the truncation face cannot exit the truncation face at the edge vertically adjacent (so the arc cannot be in meridianal form, meaning the corresponding component of $\bdy Z$ is not a meridian). 
So the arc on the truncation face continues to exit through one of the two other edges on the truncation face. One of these is shown in \reffig{NoSaddleAdjTruncFace} (d). In both cases, we slide $\bdy Z$ through the truncation face, past the point where it meets the interior edge, as shown on the far right in the figure. The effect of these moves on the component of $Z\cap \bdy C$, on the disk $D$, and on $\bdy Z$ in $\bdy N(L)$ is shown.
Observe that this first step of the isotopy temporarily increases weight by one.

\begin{figure}[h]
\begingroup%
  \makeatletter%
  \providecommand\color[2][]{%
    \errmessage{(Inkscape) Color is used for the text in Inkscape, but the package 'color.sty' is not loaded}%
    \renewcommand\color[2][]{}%
  }%
  \providecommand\transparent[1]{%
    \errmessage{(Inkscape) Transparency is used (non-zero) for the text in Inkscape, but the package 'transparent.sty' is not loaded}%
    \renewcommand\transparent[1]{}%
  }%
  \providecommand\rotatebox[2]{#2}%
  \newcommand*\fsize{\dimexpr\f@size pt\relax}%
  \newcommand*\lineheight[1]{\fontsize{\fsize}{#1\fsize}\selectfont}%
  \ifx\svgwidth\undefined%
    \setlength{\unitlength}{332.67404938bp}%
    \ifx\svgscale\undefined%
      \relax%
    \else%
      \setlength{\unitlength}{\unitlength * \real{\svgscale}}%
    \fi%
  \else%
    \setlength{\unitlength}{\svgwidth}%
  \fi%
  \global\let\svgwidth\undefined%
  \global\let\svgscale\undefined%
  \makeatother%
  \begin{picture}(1,0.34337188)%
    \lineheight{1}%
    \setlength\tabcolsep{0pt}%
    \put(0,0){\includegraphics[width=\unitlength,page=1]{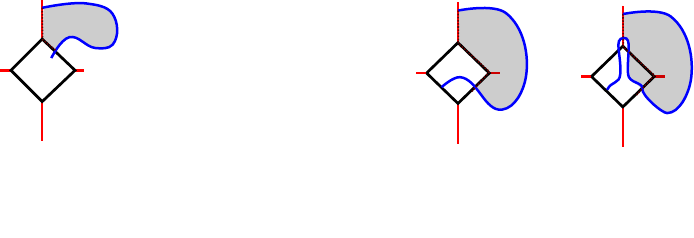}}%
    \put(0.00438847,0.14577964){\color[rgb]{0,0,0}\makebox(0,0)[lt]{\lineheight{1.25}\smash{\begin{tabular}[t]{l}(c)\end{tabular}}}}%
    \put(0.60170873,0.13939763){\color[rgb]{0,0,0}\makebox(0,0)[lt]{\lineheight{1.25}\smash{\begin{tabular}[t]{l}(d)\end{tabular}}}}%
    \put(0,0){\includegraphics[width=\unitlength,page=2]{SlideSaddleAdjTruncFace.pdf}}%
  \end{picture}%
\endgroup%

\caption{Isotope $Z$ by sliding $\bdy Z$ in a neighborhood of $\beta \cap \bdy N(L)$. Effect is shown on the component in the chunk on top, and on $\bdy N(L)$ on bottom.}
\label{Fig:NoSaddleAdjTruncFace}
\end{figure}

The next step is identical to the process in case~(a) in \reffig{Cases2Saddles}: there is now a disk $D''$ in $\bdy C$ with boundary consisting of an arc on an interior edge and an arc of $Z\cap \bdy C$. Away from the interior edge, push this disk into $C$. Use it to isotope $Z$ via a band move as in \reffig{NoSaddleTwice}. This will remove all intersections of $Z$ with interior edges meeting $D''$. Thus it removes two intersections, and the weight strictly decreases from its initial value. If the corresponding component of $\bdy Z$ was not a meridian in meridianal form,  this gives a contradiction to the fact that it was least weight: further isotopy as in \refthm{Normal} will put $Z$ back into normal form without increasing weight.

If the corresponding component of $\bdy Z$ is a meridian, and so it began in meridianal form, we may need to isotope further to put it back into meridianal form to apply \refthm{Normal}. As noted when we introduced case~(d), if this component of $\bdy Z$ is a meridian then a disk as in case~(d) in \reffig{Cases2Saddles} cannot be innermost. Thus we are in case~(c). Then we follow the proof of \refthm{Normal}: the initial isotopy of $\bdy Z$ will adjust a curve in meridianal form exactly as in \reffig{Condition4-BdryView}, 
yielding an arc on a truncation face with both endpoints on the same truncation edge. Just as in the proof of \refthm{Normal}, further isotopy will put the surface $Z$ back into meridianal form exactly as shown in Figures~\ref{Fig:NormalMeridIsotope1} and~\ref{Fig:NormalMeridIsotope2}. This gives a surface that is essential and in meridianal form with smaller weight, and so further isotopy using \refthm{Normal} will give a normal surface with smaller weight; a contradiction. 
\end{proof}

\section{Angled chunks and combinatorial area}\label{Sec:Area}

So far, we have isotoped essential surfaces into normal form, and have considered some of the resulting subsurfaces in chunks and their boundary curves. In this section, we recall an important tool to reduce the options for such subsurfaces, namely combinatorial area.

In its most general form, combinatorial area can be defined for surfaces in any chunk with dihedral angles assigned to interior edges that satisfy certain conditions; this is called an \emph{angled chunk decomposition}, and it is described in full generality in \cite{HowiePurcell}. In our setting, label each interior edge with angle $\pi/2$. By \cite[Proposition~3.15]{HowiePurcell}, the decomposition satisfies the requirements to be an angled chunk decomposition.

We now review a few consequences.

\subsection{Combinatorial area}

Let $Z$ be in normal form with respect to the chunk decomposition of $Y-N(L)$. Write $Z=\bigcup_{j=1}^m Z_j$, where each $Z_j$ is a connected normal surface embedded in a chunk.
Consider $\bdy Z_j$. Each component of $\bdy Z_j$ meets interior edges a total of $n_S = n_S(Z_j)$ times;
this is the number of instances of $S$ on the words decorating $\bdy Z_j$. It meets truncation edges a total of $n_T=n_T(Z_j)$ times, where $n_T$ must be even, since $Z_j$ enters and exits each truncation face by meeting a truncation edge. In case $Z$ is meridianal, $n_T/2$ is the total number of instances of $P$ in the words decorating $\bdy Z_j$. Otherwise, $n_T$ is the total number of instances of $B$.

The \emph{combinatorial area} of $Z_j$ is defined to be
\begin{equation}\label{Eqn:SubArea}
  a(Z_j) = \frac{\pi}{2}n_S + \frac{\pi}{2}n_T - 2\pi\chi(Z_j).
\end{equation}
Denote the number of $S$'s in $Z_j$ by $\#S$, and the number of $P$'s by $\#P$. If $Z$ is meridianal, we rewrite \refeqn{SubArea} as
\begin{equation}\label{Eqn:SubAreaMeridianal}
  a(Z_j) = \frac{\pi}{2}(\# S) + \pi(\# P) - 2\pi\chi(Z_j).
\end{equation}
The \emph{combinatorial area} of $Z$ is defined to be
\begin{equation}
  a(Z) = \sum_{j=1}^m a(Z_j).
\end{equation}

The combinatorial area satisfies a Gauss--Bonnet formula~\cite[Proposition~3.12]{HowiePurcell}:
\begin{equation}\label{Eqn:GB}
a(Z) = -2\pi\chi(Z).
\end{equation}

We also have the following results that follow from~\cite{HowiePurcell}.

\begin{lemma}\label{Lem:Areas}
Let $Z_j$ be a subsurface of a connected normal surface $Z$. The combinatorial area of $Z_j$ satisfies the following.
\begin{enumerate}
\item[(1)] If $\chi(Z_j)< 0$, then $a(Z_j)\geq 2\pi$.
\item[(2)] If $\chi(Z_j)\geq 0$, then either $a(Z_j)\geq \pi/2$, or $a(Z_j)=0$.
\item[(3)] Additionally, in the case when $a(Z_j)=0$, $Z_j$ is either:
  \begin{enumerate}
  \item[(a)] an essential torus or Klein bottle embedded in a chunk, hence $Z=Z_j$ is a torus or Klein bottle,
  \item[(b)] an annulus or M\"obius band with boundary meeting no edges, or
  \item[(c)] a disk such that $\bdy Z_j$ meets exactly four edges of the chunk decomposition.
  \end{enumerate}
\end{enumerate}
\end{lemma}

\begin{proof}
Equation (\ref{Eqn:SubArea}) in the definition of combinatorial area implies the first item. 

Note that since $Z_j$ is a connected surface, with or without boundary, and $Z_j$ is never a sphere or projective plane by~\refprop{TruncationEdgesPairs}~(2), we have $\chi(Z_j) \leq 1$.

If $\chi(Z_j)=0$, then $Z_j$ is either an annulus, M\"obius band, torus, or Klein bottle. If $Z_j$ is an annulus or M\"obius band and $\bdy Z_j$ meets an edge, then $a(Z_j)\geq \pi/2$ by equation~\refeqn{SubArea}. If $Z_j$ is a torus or Klein bottle, it lies in the interior of the chunk, so meets no edges, and $a(Z_j)=0$. Similarly if it is an annulus or M\"obius band such that $\bdy Z_j$ meets no edges, then $a(Z_j)=0$.

If $\chi(Z_j)=1$, then $Z_j$ is a disk. Then $a(Z_j) = k\pi/2 -2\pi$ by formula \refeqn{SubArea}, where $k$ is the number of edges (interior or truncation) met by $\bdy Z_j$.  Thus if $k>4$, then $a(Z_j)\geq \pi/2$. If $k=4$, then $a(Z_j)=0$ and~(3)(c) holds. Finally, the cases $k=0,1,2,3$ are ruled out by~\cite[Proposition~3.11]{HowiePurcell}.
\end{proof}

The Gauss--Bonnet formula, equation~\refeqn{GB}, implies that there are no normal spheres or normal disks embedded in a weakly generalized alternating link complement, for such a surface must have negative combinatorial area, and hence it must have a subsurface of negative combinatorial area, which is impossible by \reflem{Areas}.
Howie and Purcell observe that if a weakly generalized alternating link is irreducible, it must contain a normal sphere, and if it is boundary irreducible, it must contain a normal disk~\cite[Theorem~3.8]{HowiePurcell}.\footnote{This does not follow from our proof of \refthm{Normal}, because we assumed this result to prove \refthm{Normal}. However, the proof is similar, using surgery on disks rather than isotopy as in \cite{fg09}.
Hence a corollary is that weakly generalized alternating link complements are irreducible and boundary irreducible~\cite[Corollary~3.16]{HowiePurcell}.} Additionally, that paper studies surfaces with combinatorial area zero, namely annuli and tori, to analyze when weakly generalized alternating knots are hyperbolic~\cite[Section~4]{HowiePurcell}. By contrast, the work in this paper allows us to study more surfaces: higher genus surfaces, punctured spheres, etc. For a surface with fixed Euler characteristic, the Gauss--Bonnet formula \refeqn{GB} restricts potential normal subsurfaces to those with combinatorial area no more than the original. This paper helps us analyze the structure of potential normal subsurfaces, and how they interact with the diagram. We will see that of key importance are subsurfaces with zero combinatorial area.

\begin{lemma}\label{Lem:0AreaEnumerated}
In a weakly generalized alternating link complement,
the only normal subsurfaces that are disks with zero combinatorial area have boundaries labeled $PP$, $PSS$, $SSSS$, $BBBB$, and $BBSS$.
\end{lemma}

Recall that we do not allow both $P$ and $B$ labels in the same word. Thus we will regard $PBB$ disks as instances of $BBBB$ disks, as in \refrem{BBReplacesP}. 

\begin{proof}[Proof of \reflem{0AreaEnumerated}]
By \reflem{Areas}, a zero area disk meets four edges. If these are all interior edges, the disk is labeled $SSSS$. If it meets truncation edges, it must do so in pairs, thus either two adjacent instances of $B$ or a single $P$ (encoding two truncation edges for a curve in meridianal form). Thus the possibilities are as claimed.
\end{proof}

\section{Eliminating $SSSS$ disks}\label{Sec:SSSS}

In this section, we restrict disks of the form $SSSS$.

\begin{theorem}\label{Thm:NoSSSS}
  Suppose $Z$ is meridianally incompressible surface in normal form with respect to the chunk decomposition of $Y-N(L)$. Then any $SSSS$ disk is an essential compression disk for $\Pi$ meeting the diagram $\pi(L)$ exactly four times. Thus if the representativity satisfies $r(\pi(L),\Pi)>4$, there are no $SSSS$ disks.
\end{theorem}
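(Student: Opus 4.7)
My plan is to show that every $SSSS$ disk $D$ must be an essential compressing disk for some boundary component $\Pi_i^\epsilon$ of a chunk (with $\epsilon\in\{+,-\}$) and that $\bdy D$ meets the diagram $\pi(L)$ in exactly four points when viewed on $\Pi_i$. The stated representativity consequence then follows by contrapositive.

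First I locate $\gamma=\bdy D$. By the $SSSS$ hypothesis, $\gamma$ consists of four arcs lying in faces of the chunk $C$ containing $D$, meeting pairwise at four interior edges and avoiding all truncation edges and truncation faces. Since consecutive arcs share an interior edge, and the two faces adjacent to an interior edge lie on the same boundary component of $C$ coming from $\Pi$, all four arcs of $\gamma$ lie on a single component $\Pi_i^\epsilon$ of $\bdy C$. Under the identification $\Pi_i^\epsilon\to \Pi_i$, each $S$-label corresponds to an intersection of $\gamma$ with an edge of $\pi(L)$, so $\gamma$ meets $\pi(L)$ in exactly four points on $\Pi_i$.

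The heart of the argument is to show $\gamma$ is essential on the closed surface $\Pi_i^\epsilon$. I argue by contradiction: suppose $\gamma$ bounds a disk $D_\gamma\subset \Pi_i^\epsilon$. Then $D\cup D_\gamma$ is an embedded 2-sphere in the closure of $C$, and since $Y-\Pi$ (and hence $C$) is irreducible, this sphere bounds a 3-ball $B$. I use $B$ to isotope $D$ onto $D_\gamma$ and then slightly past $\Pi_i^\epsilon$ into the adjacent chunk, aiming to produce a surface of strictly smaller weight. This isotopy removes the four $S$-labels on $\gamma$; the only new saddle intersections come from the sweep crossing ideal vertices of $C$ that lie inside $D_\gamma$.

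To force the weight to strictly decrease, I will analyze the combinatorial structure of $\pi(L)\cap D_\gamma$: four arc-ends sit on $\gamma$, connected by arcs of $\pi(L)$ possibly meeting at internal crossings. Applying the weakly prime hypothesis to innermost sub-disks of $D_\gamma$ whose boundary meets $\pi(L)$ exactly twice shows each such sub-disk contains only a single embedded arc. Combined with the normal form constraints on $Z$ (in particular condition~(3) of \refdef{NormalSurface}, ruling out arc-edge bigons in faces), an iterated innermost-arc argument forces the sweep to introduce strictly fewer than four new saddle intersections, yielding the contradiction with the minimum-weight assumption. Hence $\gamma$ is essential, so $D$ is an essential compressing disk for $\Pi_i^\epsilon$ in $Y-\Pi$ meeting $\pi(L)$ in exactly four points, forcing $r(\pi(L),\Pi)\le 4$. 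I expect the main obstacle to be the precise weight bookkeeping in this sweep: tracking which saddles are created at each crossing inside $D_\gamma$ and confirming that weak primeness and alternating together force the combinatorics to make the net count strictly drop, especially in the edge case where $\Pi_i$ is a 2-sphere so that $\gamma$ always bounds a disk on $\Pi_i^\epsilon$.
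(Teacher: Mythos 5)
Your reduction to the case that $\gamma=\bdy Z_i$ bounds a disk $D_\gamma$ on a single component $\Pi_i^\epsilon$ is the same starting point as the paper's (there it is phrased as $Z_i$ being parallel into $\Pi^+$), and your observation that an essential $\gamma$ forces $r(\pi(L),\Pi)\le 4$ is correct. But from that point the routes diverge, and yours has a genuine gap at exactly the step you flag. The isotopy you propose --- push $D$ onto $D_\gamma$ and then ``slightly past $\Pi_i^\epsilon$ into the adjacent chunk'' --- is not an isotopy in $Y-N(L)$ as described: the link lives in $N(\Pi_i)$ over $D_\gamma$, and $D_\gamma$ necessarily contains strands and crossings of $\pi(L)$ (the four points of $\gamma\cap\pi(L)$ have diagram arcs running into its interior, and there may be arbitrarily many crossings inside). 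Sweeping the surface through that region forces it to negotiate every such strand, and the resulting changes to the word structure are not ``new saddle intersections at ideal vertices'' that weak primeness can count for you. Moreover $D$ cannot be moved independently: its boundary arcs in interior faces are glued to other subsurfaces $Z_j$ via the face identifications, so the sweep drags those along and their saddle counts are unaccounted for. The assertion that the net weight strictly drops is the entire content of the proof and is unsupported; it is also suspicious that your argument never uses meridianal incompressibility, which is a hypothesis of the theorem.

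The paper avoids any isotopy at this stage. It transfers the four arcs $A,B,C,D$ of $\gamma$ to arcs $A',B',C',D'$ on $\Pi^-$ via the face gluings (each face rotated one notch), notes that these together with meridianal arcs through four truncation faces still bound a disk on $\Pi_j^-$, and then examines the four continuation arcs of $Z$ entering that disk (labelled $1,2,3,4$ in \reffig{SSSS}). \refprop{TruncationEdgesPairs}~(4) --- no arc of $Z\cap\bdy C$ runs from an interior edge back to an identified interior edge while cutting off a disk, which is where meridianal incompressibility and minimal weight are actually consumed --- forbids each arc from exiting through certain zones, and disjoint embeddedness of the intersection curves then shows no consistent exit pattern exists. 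The only push-through in the paper is quarantined inside the proof of \refprop{TruncationEdgesPairs}~(4), where the pushed disk stays in the interior of a single chunk and hence never meets the link. To salvage your approach you would have to either prove your weight-reduction claim in full (including the case $\Pi_i=S^2$, where $\gamma$ always bounds a disk and the whole theorem rests on this step), or redirect the argument to the combinatorial analysis on $\Pi^-$ as the paper does.
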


\begin{proof}
Suppose $Z_i$ is a normal disk meeting exactly four interior edges and suppose $Z_i$ is not a compressing disk for $\Pi$. Then $Z_i$ is parallel into the boundary surface $\Pi^+$ or $\Pi^-$ of a chunk, without loss of generality say $\Pi^+$, so the curve $\bdy Z_i$ bounds a disk on $\Pi^+$ (meeting edges, faces, etc. in its interior).

\begin{figure}[h]
  \import{figures/}{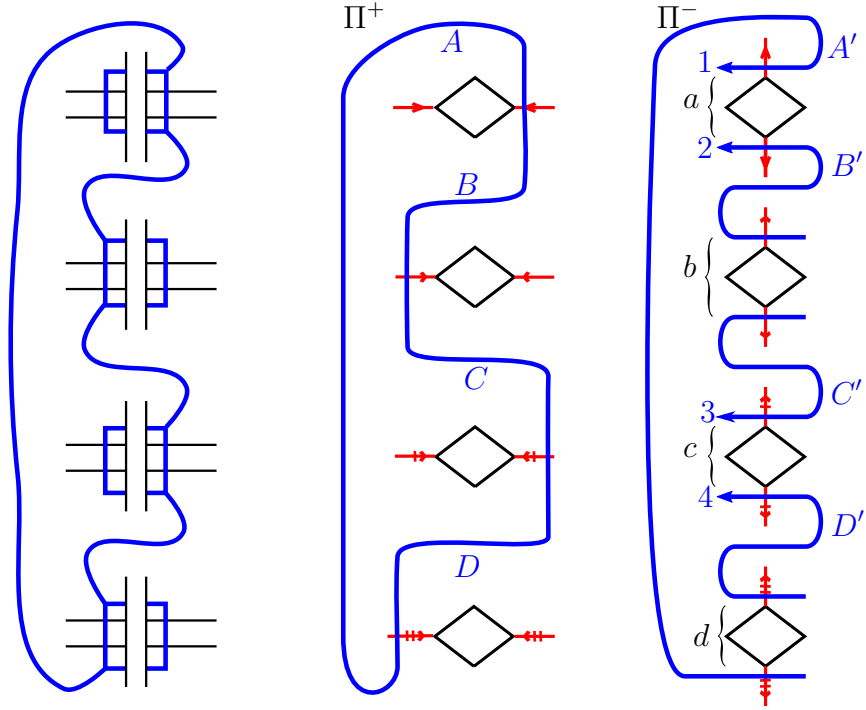}
  \caption{Left: Disk with boundary SSSS in $Y-N(L)$. Middle: form in chunk decomposition with boundary $\Pi^+$. Right: glued to $\Pi^-$ as shown.}
  \label{Fig:SSSS}
\end{figure}

The form of $Z_i$ in the diagram $\pi(L)$ is shown on the left of \reffig{SSSS}, where the curve $\bdy Z_i \cap \Pi$ is shown in blue along with four saddles. In particular, if one follows the component of $\bdy Z_i\cap \Pi$, the overpasses of $\pi(L)$ alternate between being on the right and on the left due to the diagram $\pi(L)$ being alternating. The form of $\bdy Z_i$ in one chunk is shown in the middle, where $\bdy Z_i$ lies on some component $\Pi_j^+$ of $\Pi^+$. Denote the four arcs of $\bdy Z_i$ between the intersection points with edges by $A, B, C, D$ as on the middle figure. The surface $\Pi_j^+$ is glued to $\Pi_j^-$ via a gluing that rotates the boundary of each face as in subsection~\ref{Sec:Gluing}. Thus the arcs $A$, $B$, $C$, $D$ are glued to four arcs $A'$, $B'$, $C'$, $D'$ as shown on the right. The blue arcs in the right figure are arcs of boundary curves of other normal subsurfaces $Z_k$ in the chunk decomposition; thus they are portions of simultaneously embedded closed curves.

First, observe that while $A'$, $B'$, $C'$, and $D'$ no longer necessarily connect to bound a disk in $\Pi^-$, the union of the arcs $A'$, $B'$, $C'$, and $D'$ along with meridianal arcs through the four truncation faces shown on the right of \reffig{SSSS}, and pieces of interior edges between them, all bound a disk on $\Pi_j^-$. 

Label the blue arcs that run into this disk in $\Pi^-$ by 1, 2, 3, 4 as in \reffig{SSSS}, right. Because the arcs are subsets of a disjoint union of embedded closed curves (the boundaries of normal surfaces in the chunk), the arcs $1$, $2$, $3$, and $4$ must exit the disk region shown. They might exit either by connecting to each other, for example $1$ might connect to $2$, or they might exit by running through the region between $1$ and $2$, between arcs $B'$ and $C'$, between $3$ and $4$, or between arcs $D'$ and $A'$. We call these regions zones $a$, $b$, $c$, and $d$, and they are shown in \reffig{SSSS}, right, as well.

The arcs labeled 1 and 2 cannot connect to each other or exit through zone $a$ by \refprop{NoSaddleTwice}, since they each meet the interior edge in that region once already. Similarly, the arcs 3 and 4 cannot connect with each other or exit through zone $c$. Note also that arcs 1 and 4 cannot run to zone $d$ by the same result, since arc $A'$ already meets the edge of this region, and similarly for $D'$. Similarly arcs 2 and 3 cannot run through zone $b$.

It follows that the arc labeled $1$ runs into zone $b$ or $c$. If $b$, then the arc labeled $2$ must run into zone $a$ or $b$ in order to remain disjoint from the arc labeled $1$. But this is impossible: these are exactly the zones that $2$ is not allowed to enter. Hence the arc labeled $1$ runs to zone $c$. But then the arc labeled $4$ must run into zone $c$ or $d$ in order to remain disjoint from $1$. Again this is impossible: these are exactly the zones that $4$ is not allowed to enter. It follows that there can be no $SSSS$ disk that is not a compressing disk for $\Pi$.
\end{proof}

\section{Eliminating disks with words in the letter $P$}
\label{Sec:PPPP}

In this section, we prove that if $Z$ has meridianal boundary, there are no disks whose boundaries are curves of intersection labeled with two or three letters in $P$ and $S$. That is, there are no words of type $SS$, $PP$, $PS$, $PPP$, $PSS$, $SSS$, or $PPS$. The proof for $SS$ and $SSS$ holds in more generality, and does not require meridianal boundary.
We also prove that for any subsurface $Z_i$ of $Z$, there are no boundary components $\bdy Z_i$ labeled with an odd number of instances of $P$ and $S$. Finally, we use this to show all weakly generalized alternating links are prime.

\begin{theorem}\label{Thm:No2LetterWords}
Suppose $Z$ is in normal form with respect to the chunk decomposition of $Y-N(L)$. Then no normal subsurface of $Z$ is a disk whose boundary is labelled $SS$, nor a disk in meridianal form whose boundary is labelled $PP$ or $PS$. 
\end{theorem}

\begin{proof}
Recall that the boundary of a chunk is decorated by the diagram graph, where interior edges corresond to diagram edges, and truncation faces correspond to neighborhoods of crossings. Therefore the boundary of a disk labeled by two letters $PP$, $PS$, or $SS$ gives a curve on $\Pi$ meeting the diagram $\pi(L)$ at edges or at crossings.

In the case of a label $P$, there is a corresponding arc through a truncation face in meridianal form. We isotope this curve very slightly off the crossing in a direction determined by the side of the truncation face that is met by the curve. See \reffig{IsotopeP}, where an example is shown of a boundary $PP$. The isotopy moves $\bdy Z_i$ slightly to the curve $\gamma$, which we think of as lying on $\Pi$ and meeting two edges of the diagram graph $\pi(L)$.

\begin{figure}
\begingroup%
  \makeatletter%
  \providecommand\color[2][]{%
    \errmessage{(Inkscape) Color is used for the text in Inkscape, but the package 'color.sty' is not loaded}%
    \renewcommand\color[2][]{}%
  }%
  \providecommand\transparent[1]{%
    \errmessage{(Inkscape) Transparency is used (non-zero) for the text in Inkscape, but the package 'transparent.sty' is not loaded}%
    \renewcommand\transparent[1]{}%
  }%
  \providecommand\rotatebox[2]{#2}%
  \newcommand*\fsize{\dimexpr\f@size pt\relax}%
  \newcommand*\lineheight[1]{\fontsize{\fsize}{#1\fsize}\selectfont}%
  \ifx\svgwidth\undefined%
    \setlength{\unitlength}{138.75001144bp}%
    \ifx\svgscale\undefined%
      \relax%
    \else%
      \setlength{\unitlength}{\unitlength * \real{\svgscale}}%
    \fi%
  \else%
    \setlength{\unitlength}{\svgwidth}%
  \fi%
  \global\let\svgwidth\undefined%
  \global\let\svgscale\undefined%
  \makeatother%
  \begin{picture}(1,0.49048786)%
    \lineheight{1}%
    \setlength\tabcolsep{0pt}%
    \put(0,0){\includegraphics[width=\unitlength,page=1]{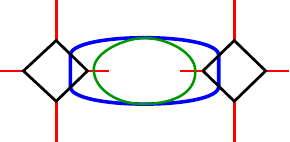}}%
    \put(0.37171808,0.18530218){\color[rgb]{0,0.58823529,0}\makebox(0,0)[lt]{\lineheight{1.25}\smash{\begin{tabular}[t]{l}$\gamma$\end{tabular}}}}%
    \put(0.66216214,0.3668661){\color[rgb]{0,0,1}\makebox(0,0)[lt]{\lineheight{1.25}\smash{\begin{tabular}[t]{l}$\bdy Z_i$\end{tabular}}}}%
  \end{picture}%
\endgroup%

  \caption{If $\bdy Z_i$ is labeled $PP$, it determines a curve $\gamma$ meeting the diagram exactly twice.}
  \label{Fig:IsotopeP}
\end{figure}

Then in all cases, the disk $Z_i$ determines a curve $\gamma$ on $\Pi$ meeting the diagram exactly twice transversely in diagram edges. Because $Z_i$ is a disk, $\gamma$ must also bound a disk. Because the diagram has representativity at least $4$, by definition of weakly generalized alternating, the disk must be parallel to $\Pi$. Then the fact that the diagram is weakly prime implies that $\gamma$ bounds a disk that does not meet any crossings.

In the case that the curve is of the form $SS$ or $PS$, this gives an immediate contradiction: the curve $\bdy Z_i$ is not in normal form, violating condition~(3) or~(4) of \refdef{NormalSurface}, respectively.

In the case the curve is of the form $PP$, then it has the form shown in \reffig{IsotopeP}. Both arcs are in meridianal form, thus they cut off a corner that is not identified to another corner of the truncation face. However, consider the interior edge encircled by the curve $\bdy Z_i$. 
Because this is an interior edge of a chunk decomposition of a weakly generalized alternating link,
at one of its endpoints, this edge is identified to another edge across the corresponding truncation face, as described in \refsec{Gluing}; see also \reffig{CrossingArc}. Thus one of its endpoints meets a corner of a truncation face that is identified to another corner of the same truncation face. This is a contradiction: because $\bdy Z_i$ is in meridianal form, neither endpoint can have this property.
\end{proof}

Note that the previous proof relies heavily on the alternating condition, when using the chunk decomposition of \refsec{Gluing}.

\begin{theorem}\label{Thm:No3LetterWords}
Suppose $Z$ is in normal form with respect to the chunk decomposition of $Y-N(L)$. Then no normal subsurface of $Z$ is a disk whose boundary is labelled $SSS$, nor a disk in meridianal form whose boundary is labelled $PPP$, $PPS$, or $PSS$. 

More generally, no normal subsurface $Z_i$ of $Z$ has a boundary component meeting an odd number of letters $S$ and $P$.
\end{theorem}

\begin{proof}
As in the proof of \refthm{No2LetterWords}, a curve $\bdy Z_i$ determines an embedded curve $\gamma$ on the diagram graph $\pi(L) \subset \Pi$, with letters $S$ running transversely through diagram edges, and letters $P$ running through a diagram edge immediately to the left or right of a crossing, determined by the position of the arc on the truncation face in meridianal form as in \reffig{IsotopeP}.

In each case, consider the checkerboard coloring of the diagram. Passing through a letter $P$ or $S$ changes the color of the face meeting $\bdy Z_i$. If the word labelling $\bdy Z_i$ is made up of exactly three letters, then the color must change exactly three times. But this is impossible: if we start in a white face and traverse $\bdy Z_i$, it changes to shaded when it meets the first letter, to white when it meets the second letter, to shaded when it meets the third, and then it must close up, implying that the starting and ending face is both shaded and white. This contradiction is illustrated in the case $PSS$ in \reffig{No3LetterWords}.
\begin{figure}[h]
\begingroup%
  \makeatletter%
  \providecommand\color[2][]{%
    \errmessage{(Inkscape) Color is used for the text in Inkscape, but the package 'color.sty' is not loaded}%
    \renewcommand\color[2][]{}%
  }%
  \providecommand\transparent[1]{%
    \errmessage{(Inkscape) Transparency is used (non-zero) for the text in Inkscape, but the package 'transparent.sty' is not loaded}%
    \renewcommand\transparent[1]{}%
  }%
  \providecommand\rotatebox[2]{#2}%
  \newcommand*\fsize{\dimexpr\f@size pt\relax}%
  \newcommand*\lineheight[1]{\fontsize{\fsize}{#1\fsize}\selectfont}%
  \ifx\svgwidth\undefined%
    \setlength{\unitlength}{125.3113203bp}%
    \ifx\svgscale\undefined%
      \relax%
    \else%
      \setlength{\unitlength}{\unitlength * \real{\svgscale}}%
    \fi%
  \else%
    \setlength{\unitlength}{\svgwidth}%
  \fi%
  \global\let\svgwidth\undefined%
  \global\let\svgscale\undefined%
  \makeatother%
  \begin{picture}(1,0.54308828)%
    \lineheight{1}%
    \setlength\tabcolsep{0pt}%
    \put(0,0){\includegraphics[width=\unitlength,page=1]{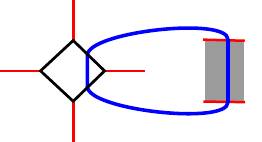}}%
    \put(0.39884658,0.18176869){\color[rgb]{0,0,1}\makebox(0,0)[lt]{\lineheight{1.25}\smash{\begin{tabular}[t]{l}$P$\end{tabular}}}}%
    \put(0.8797916,0.4254479){\color[rgb]{0,0,1}\makebox(0,0)[lt]{\lineheight{1.25}\smash{\begin{tabular}[t]{l}$S$\end{tabular}}}}%
    \put(0.87124148,0.05779251){\color[rgb]{0,0,1}\makebox(0,0)[lt]{\lineheight{1.25}\smash{\begin{tabular}[t]{l}$S$\end{tabular}}}}%
  \end{picture}%
\endgroup%

  \caption{A $PSS$ disk should have three arcs in three faces of distinct colors, but this is impossible for a checkerboard colored diagram.}
  \label{Fig:No3LetterWords}
\end{figure}

More generally, it is impossible for a curve of $\bdy Z_i$ to meet an odd number of letters $S$ and $P$, since again the existence of such a curve would contradict the checkerboard coloring of the diagram.
\end{proof}

The above theorem gives us a quick way to prove the fact that a  weakly generalized alternating link with a cellular diagram is prime. When the hat-representativity satisfies $\hat{r}(\pi(L), \Pi)>4$, this result was originally proved in Howie--Purcell~\cite[Corollary~4.7]{HowiePurcell}. We can extend now to all weakly generalized alternating links without the additional hat-representativity condition.

\begin{theorem}\label{Thm:Prime}
  A weakly generalized alternating link is prime.
\end{theorem}

\begin{proof}
Suppose not. Then there exists an essential meridianal annulus $Z$. Put it into normal form with respect to the chunk decomposition. We may assume it decomposes into normal subsurfaces that are meridianal by \refthm{Normal}, and have zero combinatorial area by the Gauss--Bonnet formula,  equation~\refeqn{GB}. 
Because it meets two meridians of the link, at least one of the subsurfaces $Z_i$ must have a boundary curve $\bdy Z_i$ whose labelling includes at least one instance of $P$. Because both boundary components are meridianal, there will be no instances of $B$. By \reflem{Areas}, $Z_i$ must be a disk meeting exactly four edges of the chunk decomposition. By \reflem{0AreaEnumerated}, the possibilities are disks with boundaries labeled $PP$ or $PSS$. Disks labeled $PP$ are ruled out by \refthm{No2LetterWords}. Disks labeled $PSS$ are ruled out by \refthm{No3LetterWords}. This gives a contradiction.
\end{proof}

\section{Disks with $BBBB$ and $BBSS$ words}\label{Sec:BBBB}

This section concerns disks whose boundaries are $BBBB$ or $BBSS$ words. As mentioned in \refrem{BBReplacesP}, an instance of $P$ may be replaced with two instances of $B$, and the results for $BB$ go through. Hence the results for $BBBB$ disks immediately apply to $PBB$ disks, and we will use this in the sequel.

For a surface $Z$ with non-meridianal boundary, consider a normal disk $Z_i$ that meets exactly four truncation edges, and no interior edges. Such a subsurface is a disk that corresponds to a $BBBB$ word. For an example, see \reffig{BBBBExample}, left. Similarly, a normal disk that meets exactly two truncation edges and exactly two interior edges corresponds to a $BBSS$ word. An example is shown in \reffig{BBBBExample}, right.

\begin{figure}[h]
  \includegraphics{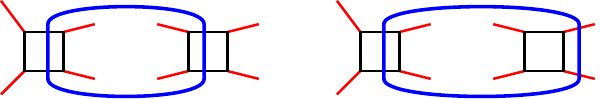}
  \caption{Left: an example of a normal disk of type $BBBB$. Right: an example of type $BBSS$.}
  \label{Fig:BBBBExample}
\end{figure}

As described in \cite{HTT2}, topologically the label $B$ means that $\partial Z_i$ meets $\bdy N(L)$. An arc $BB$ can be a part of $\partial Z$, where $\partial Z$ is on $\bdy N(L)$. In a chunk decomposition, such an arc travels between two truncation edges in a truncation face. An arc $BB$ can also connect two $BB$ arcs of the previous type, and then it lies in an interior face of a chunk. Each $BBBB$ disk is hence a quadrilateral with two opposite sides on truncation faces and the other pair of opposite sides on interior faces. Since a $BB$-arc that lies in a truncation face is a part of $\partial Z$, it is not glued to any other arc of $\partial Z_j$, for any $j$. On the other hand, a $BB$-arc that lies inside an interior face, as well as a $BS$-arc, is not a part of $\partial Z$, and is rather in the interior of $Z$. Such an arc therefore must be glued to a similar arc of some $Z_k$.

We say that two $BBBB$ disks are \emph{connected} if they share a common arc on an interior face. A collection of such disks is \emph{connected} if the union of disks is connected in $Y$. A collection of connected disks is \emph{maximal} if it is not a strict subset of a connected collection. 
We define connected and maximal collections of $BBSS$ disks similarly. 

\begin{lemma}\label{Lem:BBSSnoSS}
Suppose $D$ is a $BBSS$ disk that is parallel to the surface $\Pi$. Then $D$ is not connected to another $BBSS$ disk along its $SS$ arc.
\end{lemma}

\begin{proof}
A $BBSS$ disk meets a truncation face along the arc $BB$, two opposite faces adjacent to that truncation face of the same color --- say white --- along the two arcs $BS$, and a face containing the $SS$ arc that is shaded. 
Without loss of generality, say the $BBSS$ disk parallel to $\Pi$ lies on $\Pi^+$. Then the three arcs of the disk that lie in interior faces are glued to arcs in $\Pi^-$. As in \refsec{Gluing}, the gluing of chunks in the neighborhood of edges is via a rotation of the respective faces: in the clockwise direction for the two arcs $BS$, and in a counterclockwise direction for the arc $SS$. Superimpose all these arcs on $\Pi$ as in \reffig{BBSSnoSS}, left. The boundary of the disk $BBSS$ is shown in red, and the arcs in $\Pi^-$ that it is glued to are shown in light blue. 

\begin{figure}
\import{figures/}{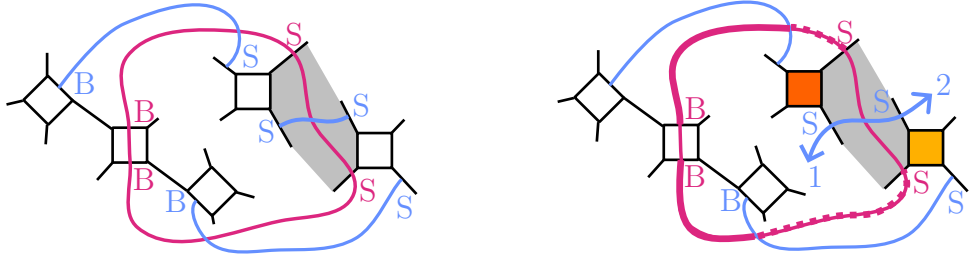}
\caption{Left: $\bdy D$ is shown in red in $\Pi^+$. Arcs of subsurfaces meeting $\Pi^-$ that are glued to $\bdy D$ are superimposed on $\Pi^+$, and shown in light blue. Right: If the blue $SS$ arc connects to form the boundary of a $BBSS$ disk, it cannot meet thick or dashed red arcs.}
\label{Fig:BBSSnoSS}
\end{figure}

By way of contradiction, suppose the $SS$ arc glues to an $SS$ arc contained in another $BBSS$ disk $E$, with boundary on $\Pi^-$.  By hypothesis, $\bdy E$ is labelled with only two instances of $S$, hence $\bdy E$ cannot contain either of the two blue arcs $BS$ in white faces obtained by rotations of arcs of $D$. Thus the disk $E$ must be disjoint from these arcs. Also note that (the superimposed copy of) $\bdy E$ intersects $\bdy D$. Because $D$ is a disk parallel into $\Pi$, $\bdy E$ must meet $\bdy D$ one additional time.

Note that the arc of $\bdy E$ labelled $1$ cannot run immediately into the dark orange truncation face; this would contradict the fact that $E$ is normal. Similarly the arc labelled $2$ cannot 
run immediately into the light orange truncation face, or $E$ is not normal. This means $\bdy E$ does not meet $\bdy D$ on the dashed arcs shown on the right of \reffig{BBSSnoSS}. 

Suppose the arc labelled $1$  runs to meet one of the thicker red arcs shown. Then there are two cases to consider.

\textit{Case 1.} Suppose arc~1 intersects the upper left red arc. Then arc~1 cannot intersect the blue arc, so it must run into a truncation face or interior edge after intersecting the thick red arc. Because arc~1 has already met two instances of $S$, it cannot meet an interior edge and remain the boundary of a $BBSS$ disk. Since meeting the truncation face uses up all the letters $BBSS$ in its boundary, this means that the arc labelled $1$ meets no edges between $1$ and the thick red arc. It follows that the two white faces on opposite sides of the dark orange truncation face are the same, since one white face contains arc~1 and the other white face contains the thick red arc.  But this would contradict the link being weakly prime. 

\textit{Case 2.} Suppose arc~1 meets the thick red arc in the lower left part of the figure. Similarly to case~1, arc~1 then must enter the region between that arc and the blue arc through a truncation face, using up all the letters $BBSS$. But then arc~1 must connect immediately to the arc labelled $2$, without meeting additional interior or truncation edges. This means that the white face containing the arc labelled $2$ and the white face containing the thick red arc in the lower left part of the figure are the same. But these are the two white faces adjacent to the light orange truncation face, contradicting the link being weakly prime. 

Therefore, arc 1 does not meet any of the two thicker red arcs. The only remaining place that the arc labelled $1$ could meet the boundary of the red disk would be on the truncation face between instances of $B$, and it must do so by entering and exiting opposite sides of that truncation face, not the sides meeting the red arc. But those opposite sides lie in shaded faces, whereas arc~1 is in a white face. This is also impossible. 
\end{proof}

\begin{lemma}\label{Lem:BBBB}
Assume that the hat-representativity $\hat{r}(\pi(L),\Pi)>4$, and that $\pi(L)$ is not a string of bigons on $\Pi$. Let $Z$ be in normal position, with a non-meridianal boundary component. Then the following form disks: 
\begin{enumerate}
\item a maximal connected collection of $BBBB$ disks for $Z$ or
\item a maximal connected collection of $BBSS$ disks for $Z$. 
\end{enumerate}
\end{lemma}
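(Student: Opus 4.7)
The plan is to analyze the union $F := \bigcup_{D \in \mathcal{C}} D$, considered as a subsurface of $Z$, via an Euler characteristic computation. Form the incidence 1-complex $G$ with one vertex per disk in $\mathcal{C}$ and one edge per shared arc; by inclusion-exclusion $\chi(F) = \chi(G)$.

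First I would verify that each disk of $\mathcal{C}$ can share arcs with at most two other disks of $\mathcal{C}$, so that $G$ has maximum valence~$2$. For $BBBB$ disks this is immediate: only the two interior face $BB$ arcs can be shared (since they lie in the interior of $Z$), whereas the two truncation face $BB$ arcs lie on $\partial Z$ and so never appear in another disk of $\mathcal{C}$. For $BBSS$ disks, the ``end to end'' hypothesis in the definition of connectedness plays the analogous role. Since $G$ is connected by maximality and has maximum valence~$2$, it is either a path or a cycle. In the path case, $\chi(F) = 1$ and $F$ has a single boundary component, so $F$ is a disk, and we are done.

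The remaining task is to rule out the cycle case, in which $F$ would be an annulus (orientable as a subsurface of the orientable $Z$). In the $BBBB$ subcase, both boundary circles of $F$ would lie on $\partial N(L)$, being unions of truncation face arcs meeting at corners on truncation edges. I would take the core curve $c$ of the annulus and track how it meets $\Pi$: within each disk of the cycle, $c$ crosses exactly two interior face arcs (one for each neighbor), hence meets $\Pi$ at exactly $2n$ points as it goes around a cycle of $n$ disks. Assembling these intersection data into a closed curve on $\Pi$, I would show that it meets $\pi(L)$ in at most a bounded number of points. Then representativity $r(\pi(L),\Pi) > 4$ forces this curve to bound a disk on $\Pi$ containing at most one crossing; combined with weak primality and the non-bigon hypothesis on $\pi(L)$, this produces a contradiction with the diagrammatic arrangement of the $BBBB$ disks around $F$.

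For the $BBSS$ subcase, a closed chain of $BBSS$ disks forms an annulus whose boundary circles traverse multiple $SS$ arcs, that is, saddles. By \refprop{TruncationEdgesPairs}(4) no such curve revisits the same saddle while cutting off a disk, which sharply restricts the possibilities; combined with a checkerboard parity argument in the spirit of \refthm{No3LetterWords} applied to the cyclic word read off the boundary, the closed-chain case leads to a contradiction. The main obstacle throughout is that an annulus in $Z$ is not forbidden by incompressibility alone, so the argument genuinely requires the combinatorial hypotheses: representativity, weak primality, and the assumption that $\pi(L)$ is not a string of bigons — the last being needed to rule out annuli parallel into $\partial Z$ along a chain of bigon regions.
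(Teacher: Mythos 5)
Your reduction to ruling out the cycle (annulus) case is the same first move as the paper's, but the way you then dispose of that case is where the proof actually lives, and your sketch does not supply it. The paper handles the annulus case by quoting Howie--Purcell: a normal annulus built entirely of $BBBB$ disks forces $\pi(L)$ to be a string of bigons by \cite[Theorem~4.6]{HowiePurcell}, and one built of $BBSS$ disks does so by \cite[Lemma~4.5]{HowiePurcell}; both contradict the hypothesis. These are substantive external results, and your proposed replacements do not work as described. For the $BBBB$ case, the core curve of the annulus meets $\Pi$ in a finite set of points (one per shared arc, so $n$ rather than $2n$), and there is no canonical way to ``assemble'' these points into a closed curve on $\Pi$ to which representativity applies: representativity constrains boundaries of compressing disks for $\Pi$, and an essential annulus with both boundary circles on $\partial N(L)$ does not hand you such a disk. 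Moreover no outright contradiction with the ``diagrammatic arrangement'' alone is available, since when the diagram \emph{is} a string of bigons such annuli genuinely exist; a correct argument must convert the existence of the cycle into the bigon-string structure, which is exactly the content of the cited results.

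For the $BBSS$ case the gaps are sharper. A $BBSS$ disk has one $BB$ side on a truncation face (unglued, on $\partial Z$) but \emph{three} sides in interior faces (two of type $BS$ and one of type $SS$), all of which are glued; so the incidence graph need not have maximum valence $2$, and the path-or-cycle dichotomy is not automatic for case (2). The parity argument of \refthm{No3LetterWords} only obstructs odd-length words, while the boundary words arising from a chain of $BBSS$ disks are even, so it yields nothing here; and \refprop{TruncationEdgesPairs}~(4) concerns a single arc returning to the same saddle while cutting off a disk in $\Pi$, a hypothesis you have not verified for the boundary circles of the putative annulus. In short, the skeleton of your argument (disk unless annulus; annulus should contradict the non-bigon hypothesis) agrees with the paper, but the elimination of the annulus --- the only nontrivial step --- is asserted rather than proved.
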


\begin{proof}
As explained above, the $BBBB$ or $BBSS$ disks are connected only along arcs on interior faces, since the arcs on truncation faces are a part of the surface boundary.

For (1), a maximal connected collection is therefore glued end to end along opposite interior faces. The only way it could not form a disk is if it forms an essential annulus or M\"obius band, made entirely of $BBBB$ disks. In the case of an annulus, it was shown in \cite[Theorem~4.6]{HowiePurcell} that $\pi(L)$ is a string of bigons on $\Pi$ (here we use the hypothesis $\hat{r}(\pi(L),\Pi)>4$). In fact the proof also implies the same result for the case of the M\"obius band, because it shows that a string of $BBBB$ disks connect to bound a chain of bigons in the diagram graph.

For (2), arcs in the interior faces have the form $BS$ or $SS$. By \reflem{BBSSnoSS}, $SS$ arcs cannot glue to other $SS$ arcs in the maximal collection. All $BBSS$ disks in the collection must be glued along arcs of the form $BS$. If the connected collection of $BBSS$ disks is not a disk, it will form a normal annulus made up of $BBSS$ disks, at least one of which is parallel into $\Pi$ because $\hat{r}(\pi(L),\Pi)>4$. But then by~\cite[Lemma~4.5]{HowiePurcell}, the diagram $\pi(L)$ is a string of bigons on $\Pi$, contradicting our hypotheses.
\end{proof}

\begin{proposition}\label{Prop:NoBBBBandBBSS}
For a surface $Z$ in normal position, no $BBBB$ region is connected to a $BBSS$ region.
\end{proposition}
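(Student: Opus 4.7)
The plan is to proceed by contradiction: assume a $BBBB$ disk $D_1$ and a $BBSS$ disk $D_2$ are connected, meaning they share a common arc $\alpha$ in an interior face (the shared arc must lie in an interior face, since arcs on truncation faces are part of $\bdy Z$ and are never identified to other arcs under the chunk gluing). The strategy is to show that $\alpha$ receives incompatible $B/S$ labels from the two sides.

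The key structural observation is that the $BB$ arc of a $BBSS$ disk must lie in a truncation face. To see this, I would label the four intersections of $\bdy D_2$ cyclically as $b_1, b_2, s_1, s_2$, with boundary arcs $\alpha_1$ from $b_1$ to $b_2$, $\alpha_2$ from $b_2$ to $s_1$, $\alpha_3$ from $s_1$ to $s_2$, and $\alpha_4$ from $s_2$ to $b_1$. Since interior edges appear only on boundaries of interior faces and never of truncation faces, any arc with an $S$-endpoint must lie in an interior face; hence $\alpha_2$, $\alpha_3$, $\alpha_4$ all lie in interior faces. At each truncation edge $b_j$, transversality of $\bdy D_2$ forces the two incident arcs to lie in the two distinct faces meeting that edge, namely one interior face and one truncation face. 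Since $\alpha_4$ (at $b_1$) and $\alpha_2$ (at $b_2$) already lie in interior faces, the arc $\alpha_1$ must lie in the truncation face meeting both $b_1$ and $b_2$.

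Having established this, I would recall that a $BBBB$ disk has its four boundary arcs alternating between truncation faces and interior faces, so its two interior-face arcs are both of type $BB$ (as used implicitly in the discussion preceding \reflem{BBBB}). From $D_1$'s perspective, the shared arc $\alpha$ is therefore a $BB$ arc. From $D_2$'s perspective, $\alpha$ must be one of $\alpha_2, \alpha_3, \alpha_4$, since $\alpha_1$ sits in a truncation face; hence $\alpha$ has endpoint labels $BS$, $SS$, or $SB$. Because the chunk gluing of corresponding interior faces is a rotation that sends truncation edges to truncation edges and interior edges to interior edges (see \refsec{Gluing}), the $B$ versus $S$ label at each endpoint of a glued arc is preserved under identification. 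Therefore a $BB$ arc cannot be identified with a $BS$, $SS$, or $SB$ arc, which is the desired contradiction.

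The main technical point I expect to need care with is justifying the claim that $\alpha_1$ in a $BBSS$ disk cannot itself lie on an interior face; this forces a detailed examination of the face incidences at $b_1$ and $b_2$. Once this incidence pattern is pinned down, together with the edge-type preservation under the chunk gluing, the $B/S$ label-mismatch contradiction is essentially immediate.
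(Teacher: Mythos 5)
Your proposal is correct and follows essentially the same argument as the paper: a connection can only occur along arcs in interior faces, the interior-face arcs of a $BBBB$ disk have both endpoints on truncation edges while those of a $BBSS$ disk have at least one endpoint on an interior edge, and the gluing identifies interior edges with interior edges, so the labels are incompatible. The only difference is that you explicitly justify the face-incidence claim that the $BB$ arc of a $BBSS$ disk must lie in a truncation face, which the paper asserts without detail.
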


\begin{proof}
Two normal subsurfaces $Z_i, Z_j$ in a chunk can only be connected across arcs in interior faces, not across arcs in truncation faces, which are left unglued and become the boundary of the normal surface. The arcs in the interior faces for a $BBBB$ region have both endpoints on a truncation edge. The arcs in the interior faces for a $BBSS$ region have one end on a truncation edge and one end on an interior edge, or both ends on interior edges. Because interior edges glue to interior edges, each arc of a $BBSS$ region must glue to an arc with an endpoint on an interior edge. Thus it cannot glue to a $BBBB$ region.
\end{proof}

\begin{theorem}\label{Thm:NonBBBBdeterminesZ}
Assume that the hat-representativity $\hat{r}(\pi(L),\Pi)>4$, and $\pi(L)$ is not a string of bigons on $\Pi$. Let $Z$ be in normal form. Then all subsurfaces $Z_i$ that are neither $BBBB$ disks nor $BBSS$ disks, together with the link $L$, determine the surface $Z$ up to isotopy.
\end{theorem}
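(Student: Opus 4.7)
The plan is to recover $Z$ from the given non-$BBBB$/$BBSS$ subsurfaces by filling in the missing $BBBB$ and $BBSS$ pieces uniquely up to ambient isotopy. By \reflem{BBBB}, each maximal connected collection of $BBBB$ disks or of $BBSS$ disks is itself a disk in $Z$; by \refprop{NoBBBBandBBSS}, such collections are pairwise disjoint; and by \refprop{MaxBBBBDiffersBBSS}, the type ($BBBB$ vs $BBSS$) of each missing disk can be read off from the cyclic word on its boundary. Thus the missing part of $Z$ is a disjoint union of disks, each labeled $BBBB$ or $BBSS$, whose boundary curves are forced to match up with the unpaired boundary arcs of the given $Z_i$'s on interior faces of chunks, together with the appropriate arcs on truncation faces of $\bdy N(L)$ dictated by the link and by the nearby given pieces.

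Next, I would argue that each such missing disk is determined up to ambient isotopy rel boundary by its boundary curve $\gamma$. Suppose $Z, Z'$ are two normal surfaces in $Y-N(L)$ with identical collections of non-$BBBB$/$BBSS$ subsurfaces, and let $M \subset Z$, $M' \subset Z'$ be a corresponding pair of maximal missing disks with common boundary $\gamma$. After a small perturbation making $M$ and $M'$ transverse, apply the standard innermost-disk argument: any innermost circle in $M \cap M'$ away from $\gamma$ cobounds disks $D \subset M$ and $D' \subset M'$, and the sphere $D \cup D'$ bounds a ball in $Y-N(L)$ by irreducibility (\cite[Corollary~3.16]{HowiePurcell}), letting us isotope $M$ past $M'$ and remove the intersection. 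Iterating until $M \cap M' = \gamma$, the union $M \cup M'$ is a 2-sphere (possibly meeting $\bdy N(L)$ along the arcs where $\gamma$ runs across truncation faces). This sphere bounds a ball in $Y$ disjoint from $L$, and the ball yields an isotopy of $M$ to $M'$ rel $\gamma$. Gluing these isotopies over the pairwise disjoint missing disks produces an ambient isotopy of $Z$ to $Z'$ in $Y-N(L)$.

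The main obstacle is the careful handling of the portions of $\gamma$ that lie on $\bdy N(L)$. When the sphere $M \cup M'$ has arcs on $\bdy N(L)$, one must cap it off by discs in $\bdy N(L)$ to form an honest closed sphere in $Y$, and verify that the resulting ball in $Y$ is disjoint from $L$ and that the isotopy can be carried out within $Y-N(L)$ without disturbing the given $Z_i$'s. This requires combining the $\bdy$-irreducibility of $Y-N(L)$ (also from \cite[Corollary~3.16]{HowiePurcell}) with the specific combinatorial form of $BBBB$ and $BBSS$ disks, using that the arcs of $\gamma$ on truncation faces cap off canonically along the harlequin tiling and so cannot acquire extra intersections with $L$ in the process.
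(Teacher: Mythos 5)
Your proposal follows essentially the same route as the paper: reduce to maximal connected collections of $BBBB$ and $BBSS$ disks via \reflem{BBBB} and \refprop{NoBBBBandBBSS}, observe these are disjoint disks attached along determined boundary curves, and use irreducibility (and $\bdy$-irreducibility) of $Y-N(L)$ to conclude the filling disks are unique up to isotopy. The paper compresses your final innermost-disk/sphere-bounds-ball argument into one sentence, so your version simply supplies more detail on that last step; no substantive difference.
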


\begin{proof}
By \reflem{BBBB}~(1), maximal connected collections of $BBBB$ disks form disjoint disks. Similarly for $BBSS$ disks by \reflem{BBBB}~(2). By \refprop{NoBBBBandBBSS}, no such disks are connected. Therefore, if we remove maximal connected collections of $BBBB$ and $BBSS$ regions, we are left with a surface with a number of disjoint disks removed.

Suppose $Z'$ is another surface in normal form, whose normal subsurfaces agree with those of $Z$ away from $BBBB$ and $BBSS$ disks. Then $Z$ and $Z'$ are both obtained by adding disks to the same surface with boundary, along the same boundary components. The link $L$ has irreducible complement by~\cite[Theorem~3.14]{HowiePurcell}. Therefore, the two surfaces disagree except in disks that can be isotoped to bound balls. Thus the surfaces are isotopic.
\end{proof}


\section{Essential Conway spheres}\label{Sec:ConwaySpheres}

In this section, we apply our results above to generalize the work of Menasco~\cite{Menasco1984} that allows the characterization of essential 4-punctured spheres, or essential Conway spheres, for alternating links as ``visible'' and ``hidden'', with terminology due to Thistlethwaite~\cite{Thistlethwaite}. We call an essential Conway sphere \textit{visible} if its intersection with $\Pi^{\pm}$ consists of only one $PPPP$ curve, and \textit{hidden} if it consists of exactly two $PSPS$ curves.
See \reffig{VisibleHidden} and compare to~\cite[Figure~3(i)-(ii)]{Thistlethwaite}.

Note that if curves as in \reffig{VisibleHidden} appear in the classical alternating case, the region exterior to any dashed line will also be a disk, hence will contain an alternating tangle. In the weakly generalized alternating setting, this region may have higher genus. 

\begin{figure}
  \includegraphics{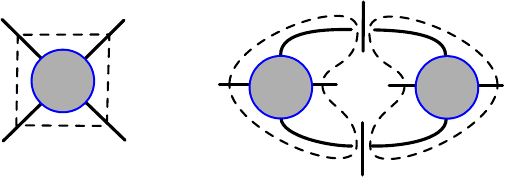}
  \caption{Left: a visible essential Conway sphere is made up of two disks labeled $PPPP$, one on either side of the projection surface. Right: a hidden Conway sphere is made up of four disks labeled $PSPS$, two on either side of the projection surface. Here we show how they meet one side of the diagram. The gray disk denotes a portion of the diagram contained in a disk in the projection surface. }
  \label{Fig:VisibleHidden}
\end{figure}

In this section, we will restrict to diagrams that are \emph{cellular}, meaning all regions of the diagram are disks. This is the only place in the paper where we use cellular.

\begin{theorem}\label{Thm:EssentialTangles}
Let $L$ be a weakly generalized alternating link on a connected projection surface $\Pi$ with a cellular diagram  and hat-representativity $\hat{r}(\pi(L),\Pi)>4$. Then any essential Conway sphere has one of two forms: visible or hidden.
\end{theorem}

We will prove the theorem by considering how essential Conway spheres decompose in a chunk decomposition, ruling out certain subsurfaces and labels on their boundaries. The proof will follow from a few lemmas.

\begin{lemma}\label{Lem:4PunctToDisks}
Under the hypotheses of \refthm{EssentialTangles}, suppose $Z=\bigcup Z_i$ is an essential Conway sphere in normal form with respect to the chunk decomposition. Then all the subsurfaces $Z_i$ must be disks.
\end{lemma}

\begin{proof}
An essential Conway sphere is an essential 4-punctured sphere, hence it will have combinatorial area $a(Z) = -2\pi\chi(S) = 4\pi$. Because it meets the knot in meridianal punctures, it will be in meridianal form. It decomposes into normal subsurfaces within chunks that are genus-zero surfaces. Thus the only subsurfaces that may arise are disks, annuli, and 3-holed spheres. Also, on each of $\Pi^{\pm}$ it must meet four instances of the letter $P$, so eight instances of $P$ total.

In a cellular link diagram, no boundary component of a genus-zero subsurface in normal position can lie entirely in a face of the diagram. This means that each boundary component of an annulus or 3-holed sphere must meet instances of $S$ or $P$, and by \refthm{No3LetterWords}, it must meet at least two such instances. We will prove any annulus or 3-holed sphere subsurface contributes too much area, to rule these out.

By equation \refeqn{SubAreaMeridianal}, the smallest possible area contribution from an annulus has area $2\pi$, meeting two instances of $S$ on each boundary component. But we still need eight instances of the letter $P$, and the total area of all subsurfaces must add up to $4\pi$. 
By \reflem{Areas}, all additional contributions to area are nonnegative.
Since each $P$ contributes area $\pi$ by equation \refeqn{SubAreaMeridianal}, annuli have too much area. Similarly for 3-holed spheres. Thus all subsurfaces must be disks.
\end{proof}

\begin{lemma}\label{Lem:4PunctDiskBdry}
Under the hypotheses of \refthm{EssentialTangles}, suppose $Z=\bigcup Z_i$ is an essential Conway sphere in normal form with respect to the chunk decomposition. Then each of the disks $Z_i$ must be labeled $SSSS$, $PSSS$, $PPSS$, $PSPS$, $PPPS$, or $PPPP$.
\end{lemma}

\begin{proof}
When $Z$ is an essential Conway sphere, by \reflem{4PunctToDisks}, all $Z_i$ are disks. Also, $Z$ must meet eight instances of $P$, with four instances each on $\Pi^{\pm}$. The total area is $a(Z)=4\pi$. Thus there is some $Z_i$ with positive area at most $2\pi$.

Consider the possible boundary words for disks with area at most $2\pi$.
Lemma~\ref{Lem:0AreaEnumerated}, along with the results of \refsec{PPPP}, rules out all zero area disks $PP$ and $PSS$. Any odd number of letters is ruled out by \refthm{No3LetterWords}. If the representativity of the diagram is 4, then 0-area $SSSS$ disks are possible, not ruled out by \refthm{NoSSSS}, although they will be ruled out for higher representativity.
The only disk contributing area $\pi/2$ has boundary $PSSS$. The possibilities contributing area $\pi$ are $PPSS$, $PSPS$, and $SSSSSS$. Those contributing area $3\pi/2$ are $PPPS$ and $PSSSSS$. Those contributing area $2\pi$ include $PPPP$, as well as other words with at most three instances of $P$ and some instances of $S$.

Each instance of $P$ in the disks enumerated above contributes at least $\pi/2$ to the sum. 
Returning to the case of an essential Conway sphere,
using the fact that there must be eight instances of $P$ in total, in fact each of the disks with positive area must contain a letter $P$, and the area of the disk will be $\pi/2$ times the number of instances of $P$. Because there must be four instances of $P$ on each side of $\Pi$, the only possibilities for disks overall are 0-area $SSSS$ disks, and positive area $PSSS$, $PPSS$, $PSPS$, $PPPS$, and $PPPP$ disks.
\end{proof}

By the assumption that $\hat{r}(\pi(L),\Pi)>4$ and the assumption that $\Pi$ is connected, the boundaries of essential compression disks on one side of $\Pi$ meet the diagram in more than four points. So on this side of $\Pi$, any $PSSS$, $PPSS$, or $SSSS$ curve bounds a disk parallel to the projection surface. Without loss of generality, say this side is $\Pi^+$. We note that unlike in the classical case of alternating links in $S^3$, here the proofs for $\Pi^+$ and $\Pi^-$ from this moment are not analogous.

We say that a $PPSS$ or $PSSS$ disk is \emph{innermost} on $\Pi^+$ if the boundary curve bounds a disk on the surface $\Pi^+$ that contains no other intersections with $Z$. We say it is \emph{outermost} if it bounds a disk $D$ on $\Pi^+$ such that $\Pi^+-D$ is a surface with boundary containing no intersections with $Z$.

\begin{lemma}\label{Lem:PSSSInnerOuter}
Under the hypotheses of \refthm{EssentialTangles}, if there exists a $PPSS$ disk or a $PSSS$ disk with boundary on $\Pi^+$, then there exists one that is innermost or one that is outermost with boundary on $\Pi^+$.
\end{lemma}

\begin{proof}
Suppose first that there is a $PPSS$ disk with boundary on $\Pi^+$. Because any disk with boundary meeting $\Pi^+$ at most four times is parallel to $\Pi^+$ (by the assumption on $\hat{r}$), we may assume the $PPSS$ curve bounds a disk in $\Pi^+$.
Since there are four instances of $P$ on each side of $\Pi$, there will either be another $PPSS$ disk, or a $PSPS$ disk, or two $PSSS$ disks with boundary on $\Pi^+$ by Lemma~\ref{Lem:4PunctDiskBdry}. 
In the first two cases, the second disk lies on one side of the $PPSS$ disk and so there are no curves $\bdy Z_i$ on the other side. In the last case, there may be a $PSSS$ disk on both sides, but then such a $PSSS$ disk is innermost. This proves the lemma when there is a $PPSS$ disk.

So now suppose there is a $PSSS$ disk with boundary on $\Pi^+$. Again, its boundary curve encloses a disk on $\Pi^+$.
Then within the same chunk there could be a single $PPPS$ disk, which means the $PSSS$ disk is innermost or outermost. There could be another $PSSS$ disk and a disk with two instances of $P$ (namely $PSPS$ or $PPSS$), in which case one of the two $PSSS$ disks will be innermost or outermost. Or there could be three additional $PSSS$ disks, in which case one is innermost or outermost.
\end{proof}

The following lemma can be viewed as an extension of a lemma of Menasco~\cite[Lemma~2]{Menasco1984} in our setting.

\begin{lemma}\label{Lem:NoPSSSorPPSS}
Under the hypotheses of \refthm{EssentialTangles}, suppose $Z=\bigcup Z_i$ is an essential Conway sphere in normal form with respect to the chunk decomposition. Then there are no $PSSS$, $PPSS$, $PPPS$, or $SSSS$ disks.
\end{lemma}

\begin{proof}
We will first rule out $PPSS$ and $PSSS$ disks with their boundaries in $\Pi^+$. Suppose there is such a disk. Then \reflem{PSSSInnerOuter} implies that there is either an innermost or outermost such disk.

The portion of the boundary in the $PPSS$ or $PSSS$ disk that runs between two instances of $S$ must lie in a single region of the link diagram; here we are using the fact that the edges and ideal vertices on $\Pi^+$ come from $\pi(L)$ as in Subsection~\ref{Sec:ChunkDecomp}. Because the diagram is alternating, it must meet the first instance of $S$ in a saddle with the link on one side, and in the second instance the link lies on the other side; see \reffig{NoPSSS} (a) and (b).

\begin{figure}
\begingroup%
  \makeatletter%
  \providecommand\color[2][]{%
    \errmessage{(Inkscape) Color is used for the text in Inkscape, but the package 'color.sty' is not loaded}%
    \renewcommand\color[2][]{}%
  }%
  \providecommand\transparent[1]{%
    \errmessage{(Inkscape) Transparency is used (non-zero) for the text in Inkscape, but the package 'transparent.sty' is not loaded}%
    \renewcommand\transparent[1]{}%
  }%
  \providecommand\rotatebox[2]{#2}%
  \newcommand*\fsize{\dimexpr\f@size pt\relax}%
  \newcommand*\lineheight[1]{\fontsize{\fsize}{#1\fsize}\selectfont}%
  \ifx\svgwidth\undefined%
    \setlength{\unitlength}{305.10420227bp}%
    \ifx\svgscale\undefined%
      \relax%
    \else%
      \setlength{\unitlength}{\unitlength * \real{\svgscale}}%
    \fi%
  \else%
    \setlength{\unitlength}{\svgwidth}%
  \fi%
  \global\let\svgwidth\undefined%
  \global\let\svgscale\undefined%
  \makeatother%
  \begin{picture}(1,0.3827596)%
    \lineheight{1}%
    \setlength\tabcolsep{0pt}%
    \put(0,0){\includegraphics[width=\unitlength,page=1]{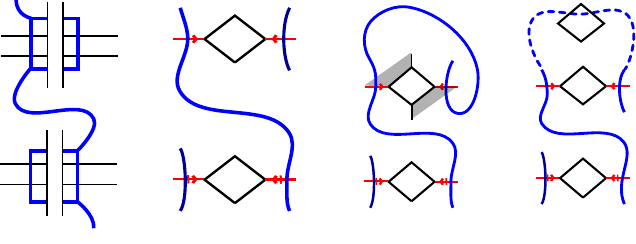}}%
    \put(0.01656482,0.00812618){\color[rgb]{0,0,0}\makebox(0,0)[lt]{\lineheight{1.25}\smash{\begin{tabular}[t]{l}(a)\end{tabular}}}}%
    \put(0.27414081,0.00848967){\color[rgb]{0,0,0}\makebox(0,0)[lt]{\lineheight{1.25}\smash{\begin{tabular}[t]{l}(b)\end{tabular}}}}%
    \put(0.57714469,0.00876554){\color[rgb]{0,0,0}\makebox(0,0)[lt]{\lineheight{1.25}\smash{\begin{tabular}[t]{l}(c)\end{tabular}}}}%
    \put(0.83314624,0.00700964){\color[rgb]{0,0,0}\makebox(0,0)[lt]{\lineheight{1.25}\smash{\begin{tabular}[t]{l}(d)\end{tabular}}}}%
    \put(0.61542209,0.33570309){\color[rgb]{0,0,1}\makebox(0,0)[lt]{\lineheight{1.25}\smash{\begin{tabular}[t]{l}$R$\end{tabular}}}}%
    \put(0,0){\includegraphics[width=\unitlength,page=2]{NoPSSS.pdf}}%
    \put(0.63210258,0.2392196){\color[rgb]{0,0,0}\makebox(0,0)[lt]{\lineheight{1.25}\smash{\begin{tabular}[t]{l}$O$\end{tabular}}}}%
  \end{picture}%
\endgroup%

  \caption{The $SS$ portion of a $PSSS$ or $PPSS$ disk, shown in (a) with saddles in the link complement, and in (b) on the chunk decomposition. In (c) and (d), in fact, it must be a $PSSS$ disk, either with $P$ away from the arc between identified edges as in (c), or with $P$ meeting this arc as in (d).}
  \label{Fig:NoPSSS}
\end{figure}

Each instance of $S$ lies on an interior edge of a chunk that is glued to another interior edge. Hence there must be at least one other curve of  $\bdy Z_i$ running through that glued edge, as shown in \reffig{NoPSSS} (b). But this gives two curves lying on opposite sides of the $PSSS$ or $PPSS$ curve that we consider. Because our disk is innermost or outermost, this is possible only if one of these two curves is actually still part of our original $PPSS$ or $PSSS$ boundary.
If the arc of the $PSSS$ disk between the two identified interior edges cuts off a disk on the projection surface $\Pi$ with those edges, then we have a contradiction to \refprop{NoSaddleTwice}. This was Menasco's argument in~\cite{Menasco1984}. In our setting, it may not be the case that this arc cuts of a disk with the edge, and so \refprop{NoSaddleTwice} may not apply.

So assume that the arc does not cut off a disk on $\Pi$. So far we have found three instances of $S$. It follows that the original disk is a $PSSS$ disk rather than a $PPSS$ disk.

Suppose first that the instance of $P$ does not lie between the two instances of $S$ on identified edges. Hence there is an arc, say $R$, on the boundary of the $PSSS$ disk meeting a single (disk) face of the chunk decomposition, with endpoints on interior edges that are identified to a crossing arc under the gluing. Denote the crossing of $\pi(L)$ that correpsonds to this crossing arc by $O$. By assumption, the disk on $\Pi$ bounded by the $PSSS$ curve does not contain the crossing $O$, or else the arc $R$ would cut off a disk on $\Pi$ that contradicts \refprop{NoSaddleTwice}. Because of checkerboard coloring, the endpoints of the arc $R$ must lie on opposite sides of the crossing $O$, as shown in \reffig{NoPSSS}(c). Then the union of this arc $R$ and another arc running between the endpoints of $R$ through the crossing $O$ forms a closed curve on the surface $\Pi$ that meets the arc $R$ only once (after slight isotopy). This curve is shown by the thin green line in \reffig{NoPSSS}(c). This is impossible: either this curve meets the boundary of the $PSSS$ disk only once on $\Pi$, which is impossible for a disk boundary, or it meets it again as the $PSSS$ disk boundary exits through one of the identified edges at the crossing $O$. But in the latter case, the disk bounded by the $PSSS$ curve in $\Pi$ would then again give a contradiction to \refprop{NoSaddleTwice}.

So the instance of $P$ lies between these two instances of $S$; see \reffig{NoPSSS}(d). This figure shows $\Pi^+$. We now consider how these arcs are glued to $\Pi^-$. Arcs in $\Pi^+$ and in $\Pi^-$ in this case are shown in \reffig{NoPSSS_Finish}. 
\begin{figure}[h]
\begingroup%
  \makeatletter%
  \providecommand\color[2][]{%
    \errmessage{(Inkscape) Color is used for the text in Inkscape, but the package 'color.sty' is not loaded}%
    \renewcommand\color[2][]{}%
  }%
  \providecommand\transparent[1]{%
    \errmessage{(Inkscape) Transparency is used (non-zero) for the text in Inkscape, but the package 'transparent.sty' is not loaded}%
    \renewcommand\transparent[1]{}%
  }%
  \providecommand\rotatebox[2]{#2}%
  \newcommand*\fsize{\dimexpr\f@size pt\relax}%
  \newcommand*\lineheight[1]{\fontsize{\fsize}{#1\fsize}\selectfont}%
  \ifx\svgwidth\undefined%
    \setlength{\unitlength}{167.3998661bp}%
    \ifx\svgscale\undefined%
      \relax%
    \else%
      \setlength{\unitlength}{\unitlength * \real{\svgscale}}%
    \fi%
  \else%
    \setlength{\unitlength}{\svgwidth}%
  \fi%
  \global\let\svgwidth\undefined%
  \global\let\svgscale\undefined%
  \makeatother%
  \begin{picture}(1,0.68843471)%
    \lineheight{1}%
    \setlength\tabcolsep{0pt}%
    \put(0,0){\includegraphics[width=\unitlength,page=1]{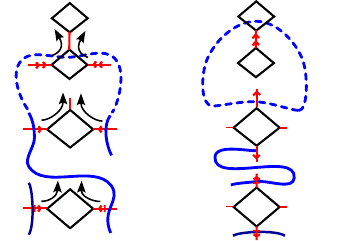}}%
    \put(-0.0031502,0.60850484){\color[rgb]{0,0,0}\makebox(0,0)[lt]{\lineheight{1.25}\smash{\begin{tabular}[t]{l}$\Pi^+$\end{tabular}}}}%
    \put(0.82892678,0.62102168){\color[rgb]{0,0,0}\makebox(0,0)[lt]{\lineheight{1.25}\smash{\begin{tabular}[t]{l}$\Pi^-$\end{tabular}}}}%
  \end{picture}%
\endgroup%

\caption{In case (d) of \reffig{NoPSSS}, the arcs of the $PSSS$ disk in $\Pi^+$ shown on the left must glue to the arcs in $\Pi^-$ shown on the right. }
\label{Fig:NoPSSS_Finish}
\end{figure}
Because the surface $Z$ is in meridianal form, the instance of $P$ on $\Pi^+$ is glued across truncation edges to an instance of $P$ in $\Pi^-$; see \reffig{MeridianalForm}. In particular, the arc in $\Pi^+$ shown in dashed lines in \reffig{NoPSSS}(d), running from $S$ to $P$ and back to an identified $S$, is mapped to a single arc in $\Pi^-$ running from some $S$ to $P$ and back to some $S$. However, on $\Pi^-$, the two endpoints of that arc must be identified together; this is due to the fact that the gluing of chunks rotates the boundary of each face in the clockwise direction in white faces, and the counterclockwise direction in shaded faces. Thus this arc in $\Pi^+$ is identified to a closed curve in $\Pi^-$. It must be the boundary of a disk $Z_i$ by \reflem{4PunctToDisks}. Then this is a $PS$ disk meeting $\Pi^-$. But by \refthm{No2LetterWords}, there are no $PS$ disks, and we have a contradiction. It follows that there are no $PPSS$ or $PSSS$ disks with boundaries in $\Pi^+$.

Now consider $SSSS$ disks. There is no such disk on $\Pi^+$, by \refthm{NoSSSS} and the fact that any compressing disk meets $\Pi^+$ more than four times. Observe that we have now ruled out $SSSS$, $PPSS$, and $PSSS$ disks on $\Pi^+$. By \reflem{4PunctDiskBdry}, there are no further options for disks with two adjacent instances of $S$.

Since the portion of $\bdy Z_i$ running between two instances of $S$ must be glued to another boundary curve running between two instances of $S$, there cannot be a disk with two adjacent instances of $S$ on the other side of $\Pi$ as well. This rules out $SSSS$, $PSSS$, and $PPSS$ disks on $\Pi^-$.

Finally, since there are no $PSSS$ disks, there can be no $PPPS$ disks, because there must be exactly four instances of $P$ on each side of the projection surface, and there are no options for disks meeting only one instance of $P$ to pair with $PPPS$.
\end{proof}

\begin{proof}[Proof of \refthm{EssentialTangles}]
Suppose $Z$ is an essential Conway sphere. By \reflem{4PunctToDisks}, it meets the diagram in normal disks. By \reflem{4PunctDiskBdry} and \reflem{NoPSSSorPPSS}, the only possible disks are labeled $PPPP$ or $PSPS$.

In the case of a $PPPP$ disk, the fact that there must be four instances of $P$ on each side of $\Pi$ implies that there are exactly two $PPPP$ disks, lying on opposite sides of the projection surface. They cut off a visible essential Conway sphere.

In the case of a $PSPS$ disk, there must be four such disks, with one $PSPS$ disk meeting another across each saddle $S$ on the same side of the projection surface. It must additionally meet another two disks at the saddle $S$ on the opposite side of the projection surface. The only possibility is that the four disks fit together with two on one side as in \reffig{VisibleHidden} right. This is a hidden essential Conway sphere.
\end{proof}

\bibliography{biblio}
\bibliographystyle{amsplain}

\end{document}